\numberwithin{equation}{section}
\newtheorem{theorem}{Theorem}[section]
\newtheorem{lemma}[theorem]{Lemma}%[section]
\newtheorem{proposition}[theorem]{Proposition}%[section]
\theoremstyle{definition}
\newtheorem{remark}{Remark}%[section]
\newcommand{\rn}{\mathbb R^n}
\newcommand{\R}{\mathbb R}
\def\I{\mathcal{I}}
\newcommand{\h}{\mathcal{H}^{n-1}}
\def\diam{\mathrm {diam}}
\def\dfrac{\displaystyle\frac}
\def\dint{\displaystyle\int}
\def\ds{d\sigma_{x}}
\def\xm{x_{\rm max}}
\def\r{r_{\rm max}}
\newcommand{\Om}{\Omega}
\newcommand{\sm}{\setminus}
\newcommand{\sq}{\subseteq}
\newcommand{\ov}{\overline}
\newcommand{\vps}{\varepsilon}
\newcommand{\ra}{\rightarrow}
\title[ Weinstock inequality in higher dimensions]{ Weinstock inequality in higher dimensions}
\author[D. Bucur, V. Ferone, C. Nitsch, C. Trombetti]{Dorin Bucur, Vincenzo Ferone, Carlo Nitsch, Cristina Trombetti}
\date{}                                           % Activate to display a given date or no date
\address{\vskip1cm\noindent Dorin Bucur \hfill\break\vskip-.2cm
\noindent Institut Universitaire de France and Laboratoire de Math\'ematiques, CNRS UMR 5127,
Universit\' e Savoie Mont Blanc, Campus Scientifique, 73376 Le-Bourget-Du-Lac, France. \hfill\break\vskip-.2cm
\noindent e-mail: {\tt dorin.bucur@univ-savoie.fr}
\hfill\break\vskip-.2cm
\noindent Vincenzo Ferone, Carlo Nitsch, Cristina Trombetti\hfill\break\vskip-.2cm
\noindent Dipartimento di Matematica e Applicazioni ``R. Caccioppoli'', Universit\`{a}
degli Studi di Napoli ``Federico II'', Complesso Universitario Monte S. Angelo, via Cintia
- 80126 Napoli, Italy. \hfill\break\vskip-.2cm
\noindent e-mail: \tt ferone@unina.it; c.nitsch@unina.it;
cristina@unina.it}
\subjclass[2010]{35P15, 35J05, 47J30}
\keywords{Stekloff Laplacian eigenvalues, inverse mean curvature flow,  Weinstock, Wentzell}
\begin{document}

\maketitle

\begin{abstract}

We prove that the Weinstock inequality for the first nonzero Steklov eigenvalue holds in $\R^n$, for $n\ge 3$, in the class of convex sets with prescribed surface area. The key result is a sharp isoperimetric inequality involving simultanously the surface area, the volume and the boundary momentum of convex sets. As a by product, we also obtain some isoperimetric inequalities for the first Wentzell eigenvalue.

\end{abstract}

\section{Introduction }

Let $\Om\sq \R^n$, $n \ge 2$, be a bounded, connected, open set with Lipschitz boundary. The first non-zero Steklov eigenvalue of $\Om$ is defined by
\begin{equation}\label{bf03}
\sigma(\Om):=\min \left \{ \dfrac{ \dint_\Om|\nabla v|^2dx}{\dint_{\partial \Om} v^2 \ds} : v\in H^1(\Om)\sm \{0\}, \int_{\partial \Om}v\ds  =0 \right \}.
\end{equation}
Any minimizer satisfies 
\begin{equation*}
\begin{cases}
-\Delta u=0&\text{in }\Om\\
\frac{\partial u}{\partial n}=\sigma u&\text{on }\partial\Om,
\end{cases}
\end{equation*}
Weinstock proved in \cite{We-a,We} that if $\Om\sq \R^2$ is simply connected, then
$$\sigma(\Om) P(\Om) \le \sigma(B) P(B),$$
where $P(\Om)$ stands for the perimeter of $\Om$ (in dimension $2$, and for surface area measure in dimension $n$) and $B\sq \R^2$ is a ball. In other words, among all simply connected sets of $\R^2$ with prescribed perimeter, the disc maximises the first Steklov eigenvalue. A similar result in dimension $3$ or higher, fails to be true in general. Namely, one can find an annulus $A_\vps= B_1\sm \ov B_\vps$ ($B_r$ denotes the ball of radius $r$) such that 
$$\sigma(A_\vps) P(A_\vps)^\frac{1}{n-1} > \sigma(B) P(B)^\frac{1}{n-1}.$$
For explanations and a complete overview of the question we refer the reader to the recent survey by Girouard and Polterovich \cite{GP17} (see also the references therein). 

The main result of the paper is to prove that the Weinstock inequality holds in any dimension of the space, provided we restrict to the class of convex sets. Namely, we shall prove that for every bounded convex set $\Om \sq \R^n$ we have
$$\sigma(\Om) P(\Om )^\frac{1}{n-1} \le  \sigma(B) P(B)^\frac{1}{n-1}.$$
In fact, our result is slightly sharper, since we prove that
$$\sigma(\Om) \frac{P(\Om )}{V(\Om) ^\frac{n-2}{n} }\le  \sigma(B)  \frac{P(B )}{V(B) ^\frac{n-2}{n}},$$
where $V(\Om)$ stands for the volume of the set $\Om$. The quantity $\frac{P(\Om )}{V(\Om) ^\frac{n-2}{n} }$ turns out to play a crucial role in the upper bound of the Steklov eigenvalues, as Colbois, El Soufi and Girouard assert in  \cite[Theorem 1.3]{CSG11}.

The key argument of our proof is the following sharp isoperimetric inequality. We prove that for every convex set $\Om \sq \R^n$
\begin{equation}\label{bf01}
\dfrac{\dint_{\partial \Omega} |x|^2 \ds}{\left(\dint_{\partial \Omega} \ds \right)\> (V(\Omega))^{2/n}}  \ge \omega_n ^{-\frac	2n},
\end{equation}
where $\omega_n$ stands for the volume of the unit ball of $\R^n$.   Equality holds (only) on the ball centered at the origin.

The proof of this inequality, which fails to be true in the absence of the convexity assumption (e.g. for a smooth set having a lot of boundary surface area close to  the origin), is not trivial. The inequality involves simultaneously three geometric quantities and recalls the inequality proved by Brock \cite{Br}. Indeed,   for every Lipschitz set  $A \sq \R^n$ Brock proved 
\begin{equation}\label{bf02}
\dfrac{ \dint_{\partial A} |x|^2 \ds} {(V(A))^{\frac{n+1}{n}}} \ge \dfrac{\dint_{\partial B} |x|^2 \ds} {(V(B))^{\frac{n+1}{n}}}
\end{equation}
 and, as a consequence, 
$$\sigma(A) V(A)^\frac 1n\le \sigma(B) V(B)^\frac 1n.$$
Among sets with prescribed volume, the boundary momentum and the perimeter are both minimal on balls. Contrary to \eqref{bf02}, 
 inequality \eqref{bf01} involves a ratio between  these  two geometric quantities which are both minimized by the same set. The refinement \eqref{bf01} of inequality \eqref{bf02}  is precisely the crucial technical point of our approach,   the main source of difficulties being the competition between these terms.

A second application of inequality \eqref{bf01} concerns the first non-zero Wentzell eigenvalue. Let $\beta \ge0$. One defines 
\begin{equation}\label{bf04}
\mu (\Om, \beta) =\min\left\{ \dfrac{ \dint_\Om|\nabla v|^2dx+ \beta \int_{\partial \Om} |\nabla_\tau v|^2 \ds }{\dint_{\partial \Om} v^2 \ds} : v\in H^1(\Om)\sm \{0\}, \int_{\partial \Om}v\ds  =0\right \}.
\end{equation}
Any minimizer satisfies 
\begin{equation*}
\begin{cases}
-\Delta u=0&\text{in }\Om\\
-\beta \Delta_\tau u+ \frac{\partial u}{\partial n}=\sigma u&\text{on }\partial\Om,
\end{cases}
\end{equation*}
Above $\nabla _\tau$ and $\Delta_\tau$ stand for the tangential gradient and for the Laplace-Beltrami operator on $\partial \Om$. Clearly, $\mu(\Om,0)=\sigma(\Om)$ and if $\beta \ra +\infty$ then $\frac 1\beta \mu(\Om, \beta)$ converges to the first non zero eigenvalue of the Laplace-Beltrami operator on $\partial \Om$.

We refer the reader to \cite{DKL16} for an analysis of the Wentzell eigenvalues in relationship with the geometry of the domain. The authors obtain in \cite{DKL16} several upper bounds in terms of the geometry, while they conjecture that the ball maximizes the Wentzell eigenvalue among all smooth sets homeomorphic to the ball and  having prescribed volume. We shall prove that this conjecture is true in the class of convex sets. Moreover, if the volume constraint is replaced by the surface area constraint, we prove that the ball gives the maximal value, provided $\beta$ is small enough (see more details in Remark \ref{bf15}). This is not surprising, as the Wentzell eigenvalue is not homogeneous on scaling. This kind of behaviour has already been observed for Robin-Laplacian eigenvalues in \cite{FK15}.

\section{Main result}

Let $\Omega \in \rn$ be a bounded, open convex set, we consider the following  scale invariant  functional

\begin{equation}
\label{ratio}
\lambda(\Omega)=\dfrac{\dint_{\partial \Omega} |x|^2 \ds}{\left(\dint_{\partial \Omega} \ds \right)\> (V(\Omega))^{2/n}} = \dfrac{W(\Omega)}{P(\Omega) (V(\Omega))^{2/n}}
\end{equation}
where $W(\Omega)= \dint_{\partial \Omega} |x|^2 \ds$ denotes the  boundary momentum   of  $\Om$, $P(\Omega)$ and $ V(\Omega)$ denote respectively the perimeter and the Lebesgue measure of $\Omega$.

The main theorem of this section is the following isoperimetric result:

\begin{theorem}
\label{main}
For every bounded, open convex set $\Omega$  of $\rn$ we have
\begin{equation}
\label{disisop}
\lambda(\Omega) \ge \omega_n^{-2/n}
\end{equation}
where $\omega_n$ is the Lebesgue measure of the unit ball in $\rn$ and equality holds only for balls centered at the origin.
%Moreover $B_R$ is the unique minimizer of $\lambda(\cdot)$.
\end{theorem}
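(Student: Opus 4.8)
The functional $\lambda$ is invariant under dilations but \emph{not} under translations, so the position of the origin genuinely matters. Since $P,V$ are translation invariant while $W(\Om-a)=\int_{\partial\Om}|x-a|^2\,\ds$ is minimized when $a$ is the centroid $\frac1{P(\Om)}\int_{\partial\Om}x\,\ds$ of $\partial\Om$, and this centroid lies in the interior of the convex body, it suffices to prove \eqref{disisop} with the origin in the interior of $\Om$. The plan is then to deform $\Om$ by the inverse mean curvature flow (IMCF) $\partial_t x=\tfrac1H\,\nu$, where $\nu$ is the outer unit normal and $H$ the sum of the principal curvatures, and to track $W$, $P$ and $V$. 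As $\lambda$ is continuous for Hausdorff convergence of convex bodies, I would first reduce to a smooth, strictly convex $\Om$ containing the origin. For such data the flow exists for all time, keeps $\Om_t$ convex with the origin inside, and, after rescaling by $e^{-t/(n-1)}$, the boundaries converge smoothly to a round sphere (Gerhardt, Urbas).

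Along the flow the first variation formulas give $\frac{d}{dt}\ds=\ds$, hence $P(\Om_t)=e^tP(\Om)$, while differentiating $|x|^2$ and the area element yields
\[
\frac{d}{dt}W(\Om_t)=W(\Om_t)+2\int_{\partial\Om_t}\frac{x\cdot\nu}{H}\,\ds .
\]
The object to monitor is the scale invariant boundary quantity $\Phi(t)=W(\Om_t)\,P(\Om_t)^{-\frac{n+1}{n-1}}$, for which these identities give
\[
\frac{d}{dt}\log\Phi(t)=\frac{2}{W(\Om_t)}\left(\int_{\partial\Om_t}\frac{x\cdot\nu}{H}\,\ds-\frac{1}{n-1}\,W(\Om_t)\right).
\]
Thus $\Phi$ is non-increasing as soon as one establishes, for every smooth convex body containing the origin, the key inequality
\[
(n-1)\int_{\partial\Om}\frac{x\cdot\nu}{H}\,\ds\le\int_{\partial\Om}|x|^2\,\ds ,
\]
with equality only on spheres centred at the origin.

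Granting this, I would let $t\to+\infty$. The rescaled surfaces become round while their radius diverges, and on a sphere of radius $R$ centred at a bounded point $p$ one has $W=(R^2+|p|^2)P$, so the position correction is lower order and $\Phi(t)\to(n\omega_n)^{-2/(n-1)}$, independently of where the limiting sphere sits. Monotonicity then gives the purely surface estimate $W(\Om)\ge(n\omega_n)^{-2/(n-1)}P(\Om)^{\frac{n+1}{n-1}}$. Writing $P^{\frac{n+1}{n-1}}=P\cdot P^{\frac{2}{n-1}}$ and inserting the classical isoperimetric inequality $P(\Om)\ge n\omega_n^{1/n}V(\Om)^{(n-1)/n}$ into the factor $P^{2/(n-1)}$ reproduces exactly $W(\Om)\ge\omega_n^{-2/n}P(\Om)V(\Om)^{2/n}$, i.e.\ \eqref{disisop}. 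Equality requires equality simultaneously in the isoperimetric inequality (forcing $\Om$ to be a ball) and in the key inequality (forcing it to be centred), hence only balls centred at the origin.

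The crux, and the step I expect to be genuinely hard, is the key inequality $(n-1)\int\frac{x\cdot\nu}{H}\,\ds\le\int|x|^2\,\ds$. It cannot hold pointwise — on an elongated body the left integrand can dominate near the ``equator'' — so a global argument is needed. I would attack it through convex geometric identities: Minkowski's formula $\int_{\partial\Om}H(x\cdot\nu)\,\ds=(n-1)P(\Om)$, the Heintze--Karcher inequality $\int_{\partial\Om}\frac1H\,\ds\ge\frac{n}{n-1}V(\Om)$, and the representation of $|x|^2$ in terms of the support function and its spherical gradient, combined with a Cauchy--Schwarz estimate on $\partial\Om$ carrying the weight $H^{-1}$. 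The essential difficulty, as the paper emphasizes, is precisely the competition between the curvature weight $H^{-1}$ and the weight $|x|^2$, and it is here that convexity must be used decisively.
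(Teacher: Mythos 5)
Your reduction is internally correct up to the step you flag as the crux, but that step is fatal: the ``key inequality'' $(n-1)\int_{\partial\Om}\frac{x\cdot\nu}{H}\,\ds\le\int_{\partial\Om}|x|^2\,\ds$ is \emph{false}, and no choice of auxiliary tools can rescue it. Pointwise source of failure: if the origin is interior, then $x\cdot\nu=h(\nu(x))\ge\rho>0$ everywhere ($\rho$ an inradius of $\Om$ about the origin), so on a smooth strictly convex body with a nearly flat cap of fixed area ($H\to0$ there) the left side blows up like $\rho\int 1/H$ while $W$ stays bounded. Worse, the conclusion your monotonicity of $\Phi(t)=W(\Om_t)P(\Om_t)^{-(n+1)/(n-1)}$ would deliver, namely the purely surface inequality $W(\Om)\ge(n\omega_n)^{-2/(n-1)}P(\Om)^{(n+1)/(n-1)}$ --- which, as you note, implies \eqref{disisop} after one use of the isoperimetric inequality, hence is \emph{strictly stronger} than the theorem --- is itself false. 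In $\R^3$ it reads $W\ge P^2/(4\pi)$; for the cylinder of radius $1$ and half-height $h$ centered at the origin one computes $P=2\pi(1+2h)$ and $W=\pi\left(1+4h+2h^2+\tfrac43h^3\right)$, so that
\begin{equation*}
\frac{4\pi W}{P^{2}}=\frac{1+4h+2h^{2}+\tfrac43 h^{3}}{1+4h+4h^{2}}=1-2h^{2}+O(h^{3})<1
\end{equation*}
for small $h>0$. Smoothing the edges preserves the strict violation, and translating can only increase $W$, so positioning cannot fix it; consequently $\Phi$ must actually \emph{increase} somewhere along the IMCF started from such a body (its limit is the ball value). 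Note also that the tools you propose (Minkowski's formula, Heintze--Karcher, and Ros-type inequalities, the last being exactly the paper's Lemma \ref{isoper}) all bound $\int 1/H$ \emph{from below}, whereas your key inequality would need an upper bound on $\int(x\cdot\nu)/H$ --- the wrong direction, and, by the flat-cap example, an impossible one.

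The paper avoids precisely this trap by never dropping the volume from the monitored quantity and by using IMCF only variationally, not as a global monotonicity. Differentiating the full $\lambda=W/(PV^{2/n})$ with the choice $\varphi=1/H$, the perimeter-variation term vanishes identically by the mean-value identity \eqref{mvzero}, leaving $\frac{d}{dt}\lambda(\Om_t)=2\int_{\partial\Om_t}\left(\langle x,\nu\rangle-\frac{W}{nV}\right)\frac{1}{H}\,\ds$, which via the pointwise bound \eqref{pointneg} is negative exactly when the excess $E(\Om)=\r(\Om)-\frac{W(\Om)}{nV(\Om)}$ is negative; this only serves to show that a \emph{minimizer} (whose existence is first established by Blaschke selection) cannot have $E<0$. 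In the complementary regime $E(\Om)\ge0$, where the flow derivative has no useful sign, the paper switches to an entirely different deformation --- cropping $\Om$ by a hyperplane near $\xm$, with the first-order bookkeeping of Lemma \ref{reverse} giving $\Delta\lambda<0$ unless $\Om$ is a ball centered at the origin. Some such case distinction appears unavoidable: your counterexamples show that any single scale-invariant surface quantity ignoring $V$ cannot be monotone along IMCF, so a repair of your approach would have to flow $\lambda$ itself and still confront the positive-excess regime by other means, which is essentially the paper's excess dichotomy.
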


When $\Omega$ is a bounded open convex set with $C^2$ boundary we will denote respectively by $H_{\partial \Omega}(x)$ the mean curvature of $\partial \Omega$ and by $\nu(x)$ the outer unit normal at $x \in \partial \Omega$.

In what follows we say that $\Omega \in C^{\infty,+}$ if it is a bounded open convex set with a $C^{\infty}$ boundary having non vanishing Gauss curvature.

In order to prove the above theorem we need to introduce the quantities
\begin{equation}
\label{xmax}
r_{\rm max}(\Omega) = {\rm max} \{|x| : x \in \bar \Omega\}, \quad \xm(\Omega) \in \partial \Omega: |\xm(\Omega)|= \r(\Omega).
\end{equation}
We define the {\sl Excess} of $\Omega$ the following quantity
\begin{equation}
\label{excess}
E(\Omega)= \r(\Omega) -  \frac{W(\Omega)}{n V(\Omega)}.
\end{equation}

Then the proof of the Theorem splits into four steps.

{\bf Step 1. Existence of minimizers}
\begin{proposition}
\label{existence}
There exists a convex set minimizing $\lambda(\cdot)$.
\end{proposition}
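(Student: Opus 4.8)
The plan is to run the direct method of the calculus of variations inside the class of convex bodies, using the scale and translation behaviour of $\lambda$ to normalize a minimizing sequence and then extracting a limit via the Blaschke selection theorem. Since $\lambda$ is invariant under dilations, I first fix the volume, say $V(\Omega_k)=\omega_n$, along a minimizing sequence $(\Omega_k)$ with $\lambda(\Omega_k)\to m:=\inf\lambda$. The functional is \emph{not} translation invariant, because of the weight $|x|^2$ in $W$; however, for a fixed shape $K$ one computes $W(K+v)=W(K)+2\langle\int_{\partial K}x\,\ds,\,v\rangle+|v|^2P(K)$, a convex function of $v$ minimized exactly when $v$ sends the boundary barycenter $g_K:=\tfrac1{P(K)}\int_{\partial K}x\,\ds$ to the origin, while $P$ and $V$ are unchanged. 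Replacing each $\Omega_k$ by the corresponding translate only decreases $\lambda$, so I may assume $g_{\Omega_k}=0$; in particular $0\in\overline{\Omega_k}$, since $g_{\Omega_k}$ is an average of boundary points of a convex set.

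The heart of the matter is a coercivity estimate ruling out degeneration, and this is where I expect the real difficulty. I would show that there is a dimensional constant $c_n>0$ such that every convex body $\Omega$ with $g_\Omega=0$ obeys $\lambda(\Omega)\ge c_n\,\diam(\Omega)^2/V(\Omega)^{2/n}$. Picking a unit vector $u$ realizing the diameter and setting $\ell(x)=\langle x,u\rangle$, one has $\int_{\partial\Omega}\ell\,\ds=0$ and $\ell$ ranges over an interval of length $D=\diam(\Omega)$, so that $W(\Omega)\ge\int_{\partial\Omega}\ell^2\,\ds=P(\Omega)\cdot\mathrm{Var}$, where $\mathrm{Var}$ is the variance of $\ell$ under the probability measure $\ds/P(\Omega)$. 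Everything thus reduces to the one–dimensional bound $\mathrm{Var}\ge c_n D^2$. A priori the boundary measure could concentrate near $\ell=0$ and spoil this, and indeed for a general (nonconvex) set it fails; convexity is the saving feature. Via the Brunn--Minkowski inequality the cross-sectional area $t\mapsto A(t)=\h(\{\ell=t\}\cap\Omega)$ has concave $(n-1)$-st root, which prevents the pushforward of $\ds$ under $\ell$ from concentrating at its mean while its support still has length $D$; this yields the variance lower bound. With $V(\Omega_k)=\omega_n$ fixed it follows that $\diam(\Omega_k)\le R$ for a uniform $R$, and since $0\in\overline{\Omega_k}$ the whole sequence is confined to the fixed ball $\overline{B_R}$.

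Once the sequence lies in $\overline{B_R}$, the Blaschke selection theorem furnishes a subsequence converging in the Hausdorff metric to a compact convex set $\Omega_\infty$. Volume is continuous along Hausdorff-convergent sequences of convex bodies, so $V(\Omega_\infty)=\omega_n>0$ and $\Omega_\infty$ is genuinely full-dimensional rather than a degenerate lower-dimensional set. Finally, for convex bodies converging in the Hausdorff metric to a full-dimensional limit the perimeter is continuous, and the surface measures $\ds$ on $\partial\Omega_k$ converge weakly-$*$ to the one on $\partial\Omega_\infty$; testing against the continuous bounded function $x\mapsto|x|^2$ on $\overline{B_R}$ gives $W(\Omega_k)\to W(\Omega_\infty)$. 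Hence $\lambda$ is continuous at $\Omega_\infty$, and $\lambda(\Omega_\infty)=\lim_k\lambda(\Omega_k)=m$, so $\Omega_\infty$ minimizes $\lambda$. The only delicate ingredient is the coercivity/variance estimate of the second paragraph; the remaining steps are the standard continuity properties of volume, perimeter, and surface measure under Hausdorff convergence of convex bodies.
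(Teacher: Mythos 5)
Your overall architecture --- fix $V(\Omega_k)=\omega_n$, translate each set so that the boundary barycenter is at the origin (which leaves $P$ and $V$ unchanged and can only decrease $W$, hence $\lambda$), prove a uniform diameter bound, apply the Blaschke selection theorem, and pass to the limit using continuity of $V$, $P$, $W$ under Hausdorff convergence of convex bodies --- is exactly the paper's skeleton, and your handling of the translation step and of the limit (nondegeneracy via volume continuity, weak-$*$ convergence of the surface measures) is correct and in fact more explicit than the paper's own write-up.

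The gap is precisely where you yourself locate the difficulty: the coercivity lemma $\mathrm{Var}\ge c_n D^2$ for the diameter direction, justified ``via Brunn--Minkowski''. Brunn--Minkowski controls the \emph{volume} cross-sections $A(t)$, not the pushforward of the \emph{surface} measure under $\ell$. What one can actually extract from $A(t)$ is the projection estimate $\h\bigl(\partial\Omega\cap\{\ell\ge t\}\bigr)\ge A(t)$ (and its mirror image), and this prevents concentration only when $\max_t A(t)$ is comparable to $P(\Omega)$. It says nothing when the surface measure is carried by pieces of $\partial\Omega$ nearly parallel to $u$: for a thin ``coin'' in $\R^3$ of radius $D/2$ and thickness $\epsilon$, with $u$ in the plane of the coin, every cross-section orthogonal to $u$ has area $O(\epsilon D)$ while $P(\Omega)\sim \pi D^2/2$, so $A(t)/P(\Omega)\to 0$ and the sketched mechanism yields nothing; the variance bound holds there only because each flat face, viewed as an $(n-1)$-dimensional convex set spanning the direction $u$, spreads its own measure --- i.e.\ the same nontrivial statement one dimension lower. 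So your lemma is plausibly true but would require a genuinely different proof (say, an induction on dimension after splitting $\partial\Omega$ according to $|\langle \nu,u\rangle|$), and as written the argument does not establish it. It is also much stronger than needed: the paper avoids any quantitative spread estimate by a soft comparison along the minimizing sequence. Since $\lambda(\Omega_j)\le \omega_n^{-2/n}+o(1)$ and $V(\Omega_j)=\omega_n$, one gets $W(\Omega_j)/P(\Omega_j)\le 1+o(1)$; if $\diam(\Omega_j)\to\infty$, convexity forces $P(\Omega_j)\to\infty$, while $\h(\partial\Omega_j\cap B_2)\le P(B_2)$ stays bounded (monotonicity of surface area for convex sets, with $0\in\Omega_j$ from your barycenter normalization), so almost all the surface lies outside $B_2$ and
\begin{equation*}
\frac{W(\Omega_j)}{P(\Omega_j)}\;\ge\; \frac{4\,\h(\partial\Omega_j\setminus B_2)}{P(\Omega_j)}\;\longrightarrow\; 4,
\end{equation*}
contradicting $W/P\le 1+o(1)$. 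Replacing your variance lemma by this two-line comparison closes the gap and recovers the paper's proof.
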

\begin{proof}
 Given a convex set $\Om$, one can perform a  translation such that $\int_{\partial \Om} x \ds =0$. In this way, its perimeter and volume do not change, while the momentum of the boundary does not increase.  So we can assume that $(\Om_j)_j$ is a minimizing sequence having the same volume and satisfying $\int_{\partial \Om_j} x \ds =0$. In particular, this implies that the origin has to be inside $\Om_j$, for every $j$.  

By Blaschke selection Theorem 1.8.6 in  \cite{Sc}  it is enough to show that $\Omega_j$ have also equibounded diameter. We can assume that $V(\Omega_j)=\omega_n$ and since any ball $B$ centered in the origin is such that $\lambda(B)=\omega_n^{-\frac{2}{n}}$, we know that
$$\lim_j \lambda(\Omega_j)\le\omega_n^{-\frac{2}{n}},$$
and consequently $$\lim_j \frac{W(\Omega_j)}{P(\Omega_j)}\le 1.$$
If arguing by contradiction we assume that $\lim_j \diam(\Omega_j)=\infty,$ convexity yields easily $\lim_j P(\Omega_j)=\infty.$ 
Thereafter if $B_2$ is the ball of radius $2$ centered in the origin it is enough to observe that $\lim_j \frac{\h(\partial\Omega_j\cap B_2)}{\h(\partial\Omega_j\setminus B_2)}=0,$
and we have
$$\lim_j \frac{W(\Omega_j)}{P(\Omega_j)}\ge \lim_j \frac{4}{1+ \frac{\h(\partial\Omega_j\cap B_2)}{\h(\partial\Omega_j\setminus B_2)}}=4$$
which gives a contradiction.
\end{proof}

\subsection{Step 2. A minimizer cannot have negative Excess}

We want to prove that, when $E(\Omega)<0$, we can deform the set $\Omega$ so that $\lambda$ decreases. We use the notion of shape derivative combined with the so called Inverse Mean Curvature Flow (IMCF) (see \cite{G, HI, Ur}).

First of all we observe that there exist sets with negative Excess.
\begin{remark}
We esplicitely observe that the ellipse  ${\mathcal E}=\{ \frac{x^2}{\varepsilon^2} +{y^2}{\varepsilon^2} =1\}$ satisfies, for small $\varepsilon$, $E({\mathcal E})<0$. Indeed $V({\mathcal E})= \pi$, $W({\mathcal E}) = \frac{2}{\varepsilon^2}+o(\varepsilon^2)$, $\r({\mathcal E})  =\frac{1}{\varepsilon}.  $  
\end{remark}

We recall that, given a smooth  field $\theta \in C^{\infty}(\rn,\rn)$, denoting by    $\I$ the identity map and by $\varphi(x)=\theta(x) \cdot \nu(x), \quad x \in \partial\Omega$,  the domain derivative of $\lambda$  in the direction $\theta$ (see \cite{HP05} ) is given by 

\begin{equation}\label{shapeder_0}
\begin{array}{l}
\lambda'(\Omega) = \displaystyle\lim_{h\to 0}\frac{\lambda((\I+h\theta)\Omega)-\lambda(\Omega)}{h}\\\\
= \dint_{\partial\Omega}(n-1) H_{\partial \Omega}(x) \left(|x|^2- \frac{W(\Omega)}{P(\Omega)} \right)\varphi(x) \,\ds + 2 \dint_{\partial\Omega} \left (<x,\nu(x)> - \frac{W(\Omega)}{nV(\Omega)} \right)\varphi(x) \, \ds
\end{array}
\end{equation}

%An important ingredient in our proof is the inverse mean curvature flow (IMCF). 
Let now $\Phi_t(x)$ be a smooth family of closed hypersurfaces so that $\Phi_t(\cdot)$ is for all $t\in[0,T]$ a smooth embedding of the sphere $\mathbb{S}^{n-1}$ in $\rn$. The one parameter family $\Phi_t$ is a solution to the inverse mean curvature flow in $[0,T]$ if
$$\frac{\partial}{\partial t}\Phi_t=\frac{1}{H_{\Phi_t}}\nu,$$  for all $t\in[0,T]$.
Namely the family is evolving so that, each point $\Phi_t(x)$ has velocity equal to the reciprocal of the mean curvature of the surface and pointing toward the outer normal direction $\nu$. Local and global existence as well as uniqueness of such a family have been studied in  \cite{G, HI, Ur}   and they are not guaranteed for all initial data $\Phi_0$. However if one consider a bounded convex set $\Omega\in C^{\infty,+}$ and $\Phi_0:\mathbb{S}^{n-1}\to\partial \Omega$ then a solution exists in $[0,\infty]$. Moreover the surface $\Phi_t$ is for all $t>0$ the boundary of a smooth convex set in $C^{\infty,+}$ that we shall denote by $\Omega_t$ and that asymptotically converges to a ball as $t\to\infty$.

In the framework of shape derivative the IMCF correspond to a deformation of the set $\Omega$ such that at each time $t\ge 0$ the function $\varphi(x)$ equals $\frac{1}{H_{\partial \Omega_t}}$ and hence
\begin{equation}\label{shapeder}
\begin{array}{l}
\frac d{dt}\lambda(\Omega_t) = \displaystyle\lim_{h\to 0}\frac{\lambda(\Omega_{t+h})-\lambda(\Omega_t)}{h}\\\\
=2 \dint_{\partial\Omega_t} \left (<x,\nu(x)> - \frac{W(\Omega_t)}{nV(\Omega_t)} \right)\frac{1}{H_{\partial \Omega_t}(x)} \, \ds
\end{array}
\end{equation}
 
\begin{remark}
We observe that if $\Omega$ is a bounded, open convex set of $\rn$
\begin{equation}
\label{mvzero}
\dint_{\partial\Omega}  \left(|x|^2- \frac{W(\Omega)}{P(\Omega)} \right) \,\ds=0;
\end{equation}
\begin{equation}
\label{mvneg}
\dint_{\partial\Omega}  \left (<x,\nu(x)> - \frac{W(\Omega)}{n V(\Omega)} \right) \, \ds
\le 0;
\end{equation}
equality holds if and only is $\Omega$ is a ball centered at the origin. (The inequality is a trivial consequence of Schwartz inequality and of the equality $\dint_{\partial\Omega}  <x,\nu(x)>  \,\ds =nV(\Omega)$).
We also have that
\begin{equation}
\label{pointneg}
| <x,\nu(x)> | - \frac{W(\Omega)}{nV(\Omega)}  \leq E(\Omega) \quad x \in \partial \Omega,
\end{equation}
equality holds if and only is $\Omega$ is a ball centered at the origin.
\end{remark}

%\begin{remark}
%We observe that  the functional $\lambda(\cdot)$  decreases if the barycenter of $\partial \Omega$ is at the origin that is
%\begin{equation}
%\label{barycenter}
%\dint_{\partial\Omega } x \> \ds =0,
%\end{equation}
%then in order to minimize $\lambda(\cdot)$ we can assume that \eqref{barycenter} is satisfied.
%\end{remark}

%\subsection{The case $E(\Omega)<0$}

\begin{lemma}
\label {isoper}
Let $\Omega$ be a bounded, open convex set of $\rn$ then 
\begin{equation}
\label{totcurv}
\dint_{\partial \Omega} \dfrac{1}{H_{\partial \Omega}(x)} \,\ds \geq \dint_{\partial \Omega^{\sharp}} \dfrac{1}{H_{\partial \Omega^{\sharp}}(x)}  \, \ds
\end{equation}
where $\Omega^{\sharp}$ is a ball such that $V(\Omega)=V(\Omega^\sharp)$.
\end{lemma}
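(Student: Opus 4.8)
My plan is to recognize \eqref{totcurv} as the classical Heintze--Karcher inequality and to prove it by the inner normal (nearest-point) map, which is particularly transparent under the convexity hypothesis. First I would compute the right-hand side explicitly. The shape-derivative formula \eqref{shapeder_0} fixes the normalization $H_{\partial\Omega}=\frac{1}{n-1}\sum_{i=1}^{n-1}\kappa_i$ (the average of the principal curvatures), since the first-variation-of-area term there carries the factor $(n-1)H_{\partial\Omega}$. On a ball $\Omega^{\sharp}$ of radius $R$ every principal curvature equals $1/R$, so $1/H_{\partial\Omega^{\sharp}}\equiv R$ and
\[
\dint_{\partial\Omega^{\sharp}}\frac{1}{H_{\partial\Omega^{\sharp}}}\,\ds = R\,P(\Omega^{\sharp}) = n\,V(\Omega^{\sharp}) = n\,V(\Omega).
\]
Thus the lemma reduces to the scale-free statement $\dint_{\partial\Omega}\frac{1}{H_{\partial\Omega}}\,\ds \ge n\,V(\Omega)$, with the right-hand side depending only on the enclosed volume (consistently with the fact that the center of $\Omega^{\sharp}$ is irrelevant).

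To prove this I would work with smooth strictly convex $\Omega$, which is the only case needed along the flow (the sets are in $C^{\infty,+}$); the general convex case follows by approximation, and it is vacuous when $H_{\partial\Omega}$ vanishes on a set of positive measure, since then the left-hand side is $+\infty$. Consider the inward normal map $\Psi(x,t)=x-t\,\nu(x)$ for $x\in\partial\Omega$, $t\ge 0$. By convexity the nearest-point projection onto $\partial\Omega$ is well defined, so every $y\in\Omega$ is of the form $\Psi(x,t)$ with $t=\dist(y,\partial\Omega)\le c(x)$, where $c(x)=1/\kappa_{\max}(x)$ is the focal distance; hence $\Psi$ maps $\{(x,t):0\le t\le c(x)\}$ onto $\Omega$. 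The tangential Jacobian is $\prod_{i=1}^{n-1}(1-t\kappa_i(x))$, which is nonnegative for $t\le c(x)$, so the area formula together with surjectivity gives
\[
V(\Omega)\ \le\ \dint_{\partial\Omega}\int_0^{c(x)}\prod_{i=1}^{n-1}\bigl(1-t\kappa_i(x)\bigr)\,dt\,\ds .
\]

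The final step is an application of the arithmetic--geometric mean inequality. On $[0,c(x)]$ each factor $1-t\kappa_i$ is nonnegative, so $\prod_{i=1}^{n-1}(1-t\kappa_i)\le (1-tH_{\partial\Omega})^{n-1}$; since $c(x)\le 1/H_{\partial\Omega}(x)$ and $(1-tH_{\partial\Omega})^{n-1}\ge 0$ on $[0,1/H_{\partial\Omega}]$, enlarging the interval and integrating yields $\int_0^{c(x)}(1-tH_{\partial\Omega})^{n-1}\,dt\le \int_0^{1/H_{\partial\Omega}}(1-tH_{\partial\Omega})^{n-1}\,dt=\frac{1}{n\,H_{\partial\Omega}(x)}$. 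Substituting back gives $V(\Omega)\le \frac1n\dint_{\partial\Omega}\frac{1}{H_{\partial\Omega}}\,\ds$, which is the claim. Equality forces equality in AM--GM at almost every point, i.e. $\kappa_1=\cdots=\kappa_{n-1}$ everywhere; a totally umbilic closed convex hypersurface is a sphere, so equality holds only on balls.

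The main obstacle I anticipate is making the covering/area-formula step rigorous: one must justify that the inward normal map is onto $\Omega$ and control the locus where it fails to be injective. This is precisely where convexity enters, through the positive reach and the well-defined nearest-point projection, and it is what makes the Euclidean argument clean; it is essentially the only place the hypothesis is used. A secondary, more routine point is the reduction to smooth strictly convex bodies, which is not strictly necessary for the paper since \eqref{totcurv} is invoked only for sets in $C^{\infty,+}$ along the inverse mean curvature flow, where smoothness and $H_{\partial\Omega}>0$ are available.
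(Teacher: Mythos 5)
Your proof is correct, but it takes a genuinely different route from the paper, which offers no argument at all for Lemma \ref{isoper}: it simply cites Theorem 1 of Ros \cite{Ros}, whose proof is analytic, resting on Reilly's integral identity applied to the solution of $\Delta u =1$ in $\Omega$ with $u=0$ on $\partial\Omega$. You instead reconstruct the original Heintze--Karcher geometric argument: check the normalization of $H_{\partial\Omega}$ against \eqref{shapeder_0} so that the right-hand side of \eqref{totcurv} equals $nV(\Omega)$, parametrize $\Omega$ by the inward normal map $\Psi(x,t)=x-t\nu(x)$, bound the Jacobian $\prod_{i=1}^{n-1}(1-t\kappa_i)$ by $(1-tH_{\partial\Omega})^{n-1}$ via AM--GM, and integrate, using $c(x)\le 1/H_{\partial\Omega}(x)$ and $\int_0^{1/H}(1-tH)^{n-1}\,dt=\tfrac{1}{nH}$; all of these steps are sound, and your AM--GM rigidity even delivers the equality case (balls, with arbitrary center, consistent with the right-hand side depending only on $V(\Omega)$), which the lemma does not claim but the paper's Theorem \ref{main} ultimately needs via other means. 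Comparing the two: Ros's Reilly-formula proof requires no convexity (only $H>0$), extends to higher-order mean curvatures and Alexandrov-type rigidity, and sidesteps the area-formula bookkeeping you rightly identify as the delicate point; your proof is elementary, self-contained, and purely Euclidean. One mild inaccuracy to flag: convexity is not really where your covering step lives. The nearest-point argument is general --- if $y\in\Omega$ is at distance $t$ from $\partial\Omega$ with foot point $x$, the inscribed ball $B(y,t)\subseteq\Omega$ touching $\partial\Omega$ at $x$ forces $1-t\kappa_i(x)\ge 0$ for every $i$ --- so surjectivity of $\Psi$ on $\{0\le t\le c(x)\}$ holds for any smooth compact domain; convexity is used only to guarantee $H_{\partial\Omega}>0$ (automatic in $C^{\infty,+}$) and in your approximation remark, which is consistent with the fact that Ros's theorem holds in that wider generality.
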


For the proof of the above lemma see Theorem 1 in  \cite{Ros}.

\begin{proposition}
\label{prop1}
Let $\Omega$ be a bounded, open convex set of $\rn$ such that
\begin{equation}
\label{minore}
E(\Omega) < 0,
\end{equation}
then $\Omega$ is not a minimizer of $\lambda(\cdot)$.
\end{proposition}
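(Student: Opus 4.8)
The plan is to exhibit, whenever $E(\Omega)<0$, a convex competitor on which $\lambda$ is strictly smaller, so that $\Omega$ cannot minimize. The deformation I would use is the one induced by the inverse mean curvature flow, whose normal speed is $\varphi=1/H_{\partial\Omega}$; the virtue of this particular choice is that it kills the curvature term of the general shape derivative \eqref{shapeder_0}. Indeed, with $\varphi=1/H_{\partial\Omega}$ the first integral in \eqref{shapeder_0} becomes $(n-1)\int_{\partial\Omega}(|x|^2-W(\Omega)/P(\Omega))\,\ds$, which vanishes by \eqref{mvzero}, and one is left with the reduced formula \eqref{shapeder}. Only the first variation matters here, so global existence of the flow is not really essential; what one uses is that for $t>0$ the evolved set $\Omega_t$ is again convex and of class $C^{\infty,+}$, hence an admissible competitor.

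First I would settle the regular case $\Omega\in C^{\infty,+}$, where $H_{\partial\Omega}>0$ on all of $\partial\Omega$. The hypothesis $E(\Omega)<0$ enters through the pointwise bound \eqref{pointneg}: for every $x\in\partial\Omega$,
$$\langle x,\nu(x)\rangle-\frac{W(\Omega)}{nV(\Omega)}\le |\langle x,\nu(x)\rangle|-\frac{W(\Omega)}{nV(\Omega)}\le E(\Omega)<0.$$
Dividing by $H_{\partial\Omega}(x)>0$ and integrating, \eqref{shapeder} gives
$$\frac{d}{dt}\lambda(\Omega_t)\Big|_{t=0}\le 2\,E(\Omega)\int_{\partial\Omega}\frac{1}{H_{\partial\Omega}(x)}\,\ds<0,$$
where the integral is strictly positive, and in fact bounded below in terms of $V(\Omega)$ only, by Lemma \ref{isoper}. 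Thus $\lambda(\Omega_t)<\lambda(\Omega)$ for small $t>0$, and since $\Omega_t$ is a convex competitor, $\Omega$ is not a minimizer.

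For a general bounded convex set $\Omega$ with $E(\Omega)<0$, I would approximate it by sets $\Omega^\varepsilon\in C^{\infty,+}$ with $\Omega^\varepsilon\to\Omega$ in Hausdorff distance. Since $r_{\rm max}$, $W$, $V$ and $P$ --- and therefore $E$ and $\lambda$ --- are continuous under Hausdorff convergence of convex bodies, we have $E(\Omega^\varepsilon)\to E(\Omega)<0$ and $\lambda(\Omega^\varepsilon)\to\lambda(\Omega)$. The estimate of the regular case, applied along the flow issued from $\Omega^\varepsilon$ and combined with the volume-dependent lower bound on $\int 1/H_{\partial\Omega}$ coming from Lemma \ref{isoper}, yields a decrease rate that stays bounded away from zero while the excess remains negative; integrating over a short time interval produces a fixed drop $\lambda(\Omega^\varepsilon_\tau)\le \lambda(\Omega^\varepsilon)-\delta$ with $\delta>0$ independent of $\varepsilon$. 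Since $\lambda(\Omega^\varepsilon)\to\lambda(\Omega)$, for $\varepsilon$ small enough $\Omega^\varepsilon_\tau$ is a convex set with $\lambda(\Omega^\varepsilon_\tau)<\lambda(\Omega)$, so again $\Omega$ is not a minimizer.

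I expect the main obstacle to be exactly this passage from smooth to merely convex sets: the flow and the shape-derivative identity \eqref{shapeder} are only at our disposal for $C^{\infty,+}$ data, so the whole content of the argument is to transport the strict decrease to a general convex set. The delicate ingredients are the continuity of all the geometric quantities under smooth strictly convex approximation, and the uniformity in $\varepsilon$ --- namely that the excess stays negative, and the decrease rate does not degenerate, over a time interval whose length does not shrink to zero as $\Omega^\varepsilon\to\Omega$. By contrast, the regular case is a short and essentially algebraic consequence of \eqref{shapeder}, the bound \eqref{pointneg}, and the sign hypothesis $E(\Omega)<0$.
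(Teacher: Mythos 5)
Your smooth case is exactly the paper's: with $\varphi=1/H_{\partial\Omega}$ the curvature term in \eqref{shapeder_0} dies by \eqref{mvzero}, and \eqref{pointneg} together with $E(\Omega)<0$ gives $\lambda'(\Omega)\le 2E(\Omega)\int_{\partial\Omega}H_{\partial\Omega}^{-1}\,\ds<0$. The problem is the passage to a general convex set, which you correctly identify as the whole content of the argument but then do not actually carry out. Your plan needs the excess $E(\Omega^\varepsilon_t)$ to stay bounded away from $0$ from below for $t$ in a time interval $[0,\tau]$ with $\tau$ and the bound uniform in $\varepsilon$, and this is precisely what fails to follow from continuity. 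The quantity $r_{\rm max}(\Omega^\varepsilon_t)$ is fine (it grows at most like $e^{t/(n-1)}$, as in \eqref{conclusion}), but the term $W/(nV)$ is not under control: along the flow $V'(\Omega^\varepsilon_t)=\int H^{-1}\,\ds$, and this integral is \emph{not} uniformly bounded above as $\varepsilon\to0$ when $\partial\Omega$ contains flat portions or edges (on flattening regions $H\to0$ on a set of positive area, so $\int H^{-1}$ blows up). Hence over any fixed time $\tau>0$ the volume of $\Omega^\varepsilon_\tau$ may run far from $V(\Omega)$, the ratio $W/(nV)$ may crash, and the excess may change sign in a time that shrinks to $0$ with $\varepsilon$. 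Nothing in your sketch rules this out; "continuity of the geometric quantities under Hausdorff convergence" only controls the data at $t=0$, not the flow on a uniform time interval.

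The paper's proof is built to avoid exactly this obstruction, and it does so by two devices absent from your proposal. First, it argues by contradiction, assuming $\Omega$ is a minimizer, and uses the minimality \emph{inside} the derivative estimate: writing $\frac{W(\Omega_k(t))}{nV(\Omega_k(t))}=\frac{\lambda(\Omega_k(t))P(\Omega_k(t))}{n[V(\Omega_k(t))]^{1-2/n}}$ and invoking $\lambda(\Omega_k(t))\ge\lambda(\Omega)$ together with the monotonicity of perimeter, it bounds this term below by $\frac{\lambda(\Omega)P(\Omega)}{n[V(\Omega_k(t))]^{1-2/n}}$, as in \eqref{derivate3} --- so the uncontrollable evolution of $W$ never enters. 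Second, it takes a \emph{nested} approximating sequence $\Omega_{k+1}\subset\Omega_k$ and exploits the comparison principle of the IMCF: once $T$ and $\delta$ are fixed so that \eqref{small} holds and $V(\Omega_{k_0}(T))<V(\Omega)+\delta$ for one fixed smooth set $\Omega_{k_0}$, the inclusion $\Omega_k(T)\subset\Omega_{k_0}(T)$ gives the volume bound uniformly in $k\ge k_0$. Lemma \ref{isoper} then yields uniform exponential volume growth, hence a drop in $\lambda$ of fixed size, and Blaschke compactness via the sandwich $\Omega\subset\Omega_k(T)\subset\Omega_{k_0}(T)$ produces a convex $\tilde\Omega$ with $\lambda(\tilde\Omega)<\lambda(\Omega)$, the desired contradiction. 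In short: your direct strategy would require a uniform persistence-of-negative-excess lemma that you neither prove nor have obvious tools to prove, whereas the paper trades the direct estimate for a contradiction argument in which minimality and the flow's comparison principle supply the missing uniformity.
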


\begin{proof}
If $\Omega \in C^{\infty,+}$ the proof follows  observing firstly that $\Omega$ is not a ball centered at the origin, otherwise  $E(\Omega)=0$ would be satisfied. Then, choosing $\varphi(x) = \dfrac{1}{H(x)}, x \in \partial \Omega$ in \eqref{shapeder_0},  by \eqref{mvzero}  and  \eqref{pointneg}  we have 

\[\lambda'(\Omega) \leq 2E(\Omega) \dint_{\partial \Omega} \dfrac{1}{H_{\partial \Omega}(x)} \,\ds <0.
\]

If $\Omega$ does not belong to $C^{\infty,+}$ we argue by contradiction and we assume that $\Omega$ minimizes $\lambda(\cdot)$. Let $ (\Omega_k)_k \subset C^{\infty,+}$ such that
$$
\Omega_{k+1} \subset \Omega_k; \quad \lim_{k\rightarrow +\infty}\Omega_k =\Omega \>{\rm in \>the \>Hausdorff \>sense}
$$
obviously

\begin{equation}
\label{limit}
\begin{array}{ll}
&\lim_{k\rightarrow +\infty}V(\Omega_k) =V(\Omega) ; \> \lim_{k\rightarrow +\infty}P(\Omega_k )=P(\Omega);\\ \\
& \lim_{k\rightarrow +\infty}W(\Omega_k )=W(\Omega); \> \lim_{k\rightarrow +\infty}\r(\Omega_k) =\r(\Omega).
\end{array}
\end{equation}

We consider the IMCF for each $\Omega_k$. With $\Omega_k(t), t \ge 0$ we denote the one parameter family generated by the flow and by $\Omega_k(0)=\Omega_k$. By Hadamard formulae  \cite{HP05}

\begin{equation}
\label{derivate1}
\frac d{dt} V(\Omega_k(t))= \dint_{\partial \Omega_k(t)} \dfrac{1}{H_{\partial \Omega_k(t)}(x)} \,\ds; \quad \frac d{dt} P(\Omega_k(t)) = (n-1)P(\Omega_k(t));
\end{equation}

\begin{equation}
\label{derivate2}
 \frac d{dt}\r(\Omega_k(t)) \leq   \dfrac{\r(\Omega_k(t))}{n-1}, 
\end{equation}
and then

\begin{equation}
\label{conclusion}
 \r(\Omega_k(t)) \leq r(\Omega_k) e^{t/(n-1)} \quad t \ge 0.
\end{equation}

The minimality of $\Omega$  implies $\lambda(\Omega) \leq \lambda(\Omega_k(t))$, \eqref{derivate1} together with the monotonicity of the perimeter with respect to the inclusion of convex sets implies   that $P((\Omega_k(t)) \ge P((\Omega_k(0))=P(\Omega_k) >P(\Omega)$, then using 
 \eqref{shapeder}, \eqref{mvzero}, \eqref{pointneg}, \eqref{conclusion} we have for $t \ge 0$:

\begin{equation}
\label{derivate3}
\begin{array}{ll}
&\dfrac d{dt}\lambda(\Omega_k(t)) \leq 2\dint_{\partial \Omega_k(t)} \left( \r(\Omega_k(t))   - \dfrac{W(\Omega_k(t))}{n V(\Omega_k(t))}\right) \dfrac{1}{H_{\partial \Omega_k(t)}(x)} \,\ds
=\\ \\
& =2\dint_{\partial \Omega_k(t)} \left( \r(\Omega_k(t))   - \dfrac{\lambda(\Omega_k(t))P(\Omega_k(t))}{n [V(\Omega_k(t))]^{1-2/n}}\right) \dfrac{1}{H_{\partial \Omega_k(t)}(x)} \,\ds \le \\\\
& \le 2 \left[  \dint_{\partial \Omega_k(t)} \dfrac{1}{H_{\partial \Omega_k(t)}(x)} \,\ds \right] \left(\r(\Omega_k) e^{t/(n-1)}  - \dfrac{\lambda(\Omega)P(\Omega)}{n [V(\Omega_k(t))]^{1-2/n}}\right)=\\\\
&=2V'(\Omega_k(t))  \left(\r(\Omega_k) e^{t/(n-1)}  - \dfrac{\lambda(\Omega)P(\Omega)}{n [V(\Omega_k(t))]^{1-2/n}}\right).\\
\end{array}
\end{equation}

Integrating from $[0,T]$, using \eqref{derivate1} we have $V(\Omega) \le V(\Omega_k) \le V((\Omega_k(t)) < V((\Omega_k(T))$ we have

\[
\lambda(\Omega_k(T))-\lambda(\Omega_k) \le 2 [V(\Omega_k(T)) - V(\Omega_k)] \left(\r(\Omega_k) e^{T/(n-1)}  - \dfrac{\lambda(\Omega)P(\Omega)}{n [V(\Omega_k(T))]^{1-2/n}}\right).
\]

Since \eqref{minore} is in force there exists $\delta >0$ and $T>0$ small enough and  $k_0>>1$ such that
\begin{equation}
\label{small}
\left(\r(\Omega) e^{T/(n-1)}  - \dfrac{\lambda(\Omega)P(\Omega)}{n [V(\Omega)+\delta]^{1-2/n}}\right)<0
\end{equation}

\[
V((\Omega_{k_0}(T)) < V(\Omega) + \delta.
\]

Since the IMCF preserves the inclusion $\Omega_k(T) \subset \Omega_{k_0}(T)$ for $k\ge k_0$ which implies $V((\Omega_{k}(T)) < V(\Omega) + \delta$ for $k\ge k_0$.

Hence for $k\ge k_0$ we obtain

\begin{equation}
\label{final}
\lambda(\Omega_k(T))-\lambda(\Omega_k) \le 2 [V(\Omega_k(T)) - V(\Omega_k)] \left(\r(\Omega_k) e^{T/(n-1)}  - \dfrac{\lambda(\Omega)P(\Omega)}{n [V(\Omega)+\delta]^{1-2/n}}\right)
\end{equation}

By Lemma \ref{isoper} we have 

\[
V'(\Omega_k(t)) \ge n(n-1) V(\Omega_k(t)) \quad {\rm for} \> t \ge 0
\]

and so 

\[
V(\Omega_k(T) \ge V(\Omega_k) e^{n(n-1)T}.
\]

By \eqref{final}, using \eqref{small} and passing into the limit 

\[
[\lim_{k \rightarrow \infty}\lambda(\Omega_k(T))]-\lambda(\Omega) \le 2 V(\Omega)(e^{n(n-1)T}-1) \left(\r(\Omega) e^{T/(n-1)}  - \dfrac{\lambda(\Omega)P(\Omega)}{n [V(\Omega)+\delta]^{1-2/n}}\right)<0
\]

Since  for $k \ge k_0$ we have $\Omega \subset \Omega_k(T) \subset \Omega_{k_0}(T)$, then there exists a convex set $\tilde \Omega$ such that $\lim_{k \rightarrow \infty}\lambda(\Omega_k(T))= \lambda(\tilde \Omega)$,
so
\[
\lambda(\tilde \Omega)<\lambda(\Omega)
\]
which contradicts the minimality of $\Omega$.
\end{proof}

 %\subsection{The case $E(\Omega) >0$.}
\subsection{ Step 3. A minimizer cannot have positive Excess}

First of all we observe that there exist sets having positive Excess.
\begin{remark}
We esplicitely observe that the ellipse  ${\mathcal E}=\{ \frac{x^2}{(1+\varepsilon)^2} + \frac{y^2}{(1-\varepsilon)^2} =1\}$ satisfies, for small $\varepsilon$, $E({\mathcal E})>0$. Indeed $V({\mathcal E})= \pi(1-\varepsilon^2)$, $W(({\mathcal E})) = 2 \pi +o(\varepsilon)$, $\r(({\mathcal E}))  =1+ \varepsilon $ and 
$\frac{W(\mathcal E)}{nV(\mathcal E)}= 1+ o(\varepsilon).$
\end{remark}

The strategy is to show that when a bounded convex set $\Omega$ satisfies $E(\Omega)>0$ then it is possible to crop the set by cutting with an hyperplane and decrease the value of $\lambda$.

Since ${\dint_{\partial \Omega} x \, \ds}$ is in force we have that $0 \in \Omega$.
%we consider a supporting hyperplane $\pi$ passing through  $\xm(\Omega)$   horthogonal to the direction $\frac{O \xm(\Omega)}{\r}=\theta $. 
For every $\epsilon >0$ we consider an halfspace $T_{\epsilon}$ with outer unit normal pointing in the direction $\xm(\Omega)$ %$\frac{O \xm(\Omega)}{\r}=\theta $  
and intersecting $\Omega$ %$O \xm(\Omega)$ 
at a  distance $\epsilon$ from $\xm(\Omega)$. We define $\Omega_{\epsilon} = \Omega \cap T_{\epsilon}$
 and we denote by  $A_{\epsilon}= \partial \Omega_{\epsilon} \cap \partial T_{\epsilon}$.
 
 We set
 \[
 \Delta W = W(\Omega_{\epsilon}) -W(\Omega); \> \Delta V =  V(\Omega_\epsilon) -V(\Omega);  \>\Delta P=P(\Omega_\epsilon) -P(\Omega).
 \]
 Obviously the quantities $ \Delta W$, $\Delta V$ and $\Delta P$ depend on $\epsilon$ and are vanishing as $\epsilon$ goes to $0$

\begin{lemma}
\label{reverse}
There exists a positive constant $C(\Omega)$ such that for all $\epsilon>0$ small enough
\begin{equation}
\label{deltaV}
|\Delta V |  \le C(\Omega) |\Delta P|.
\end{equation} 
Moreover
\begin{equation}
\label{deltaW}
\Delta W \le 2 \r(\Omega)\Delta V +\r^2(\Omega) \Delta P + o(\Delta P) + o(\Delta V).
\end{equation}
\end{lemma}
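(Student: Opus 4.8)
The plan is to place the cut in coordinates adapted to the farthest point and to express the three increments as integrals of the convex profile of $\partial\Omega$ over the base of the removed cap. Write $\r=\r(\Omega)$, $\xm=\xm(\Omega)$ and $e=\xm/\r$, so that $\{\langle x,e\rangle=\r\}$ is a supporting hyperplane of $\Omega$ touching $\partial\Omega$ only at $\xm$ (any boundary point with $\langle x,e\rangle=\r$ has $|x|^2=\r^2+|x-\langle x,e\rangle e|^2\ge\r^2$, forcing $x=\xm$). In a neighbourhood of $\xm$ the boundary is the graph $\langle x,e\rangle=\r-f(y)$ of a nonnegative convex function $f$ of the tangential variable $y\perp e$, with $f(0)=0$; the cutting hyperplane is $\partial T_\epsilon=\{\langle x,e\rangle=\r-\epsilon\}$, the base is $A_\epsilon=\{y:f(y)\le\epsilon\}$, and the removed lateral part $\Sigma_\epsilon:=\partial\Omega\setminus T_\epsilon$ is the graph of $f$ over $A_\epsilon$. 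With $d\h=\sqrt{1+|\nabla f|^2}\,dy$ on $\Sigma_\epsilon$ and $d\h=dy$ on $A_\epsilon$ one gets
\begin{equation*}
-\Delta V=\int_{A_\epsilon}(\epsilon-f)\,dy,\qquad -\Delta P=\int_{A_\epsilon}\Bigl(\sqrt{1+|\nabla f|^2}-1\Bigr)\,dy .
\end{equation*}
The decisive input is that $\Omega\subseteq\ov{B_\r}$ with tangency at $\xm$ forces $f(y)\ge\r-\sqrt{\r^2-|y|^2}\ge|y|^2/(2\r)$; hence $A_\epsilon\subseteq\{|y|\le\sqrt{2\r\epsilon}\}$ and, using the convexity inequality $f(y)\le\langle\nabla f(y),y\rangle$, the gradient bound $|\nabla f(y)|\ge f(y)/|y|\ge|y|/(2\r)$.

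For \eqref{deltaV} I would integrate by parts. Since $f\equiv\epsilon$ on $\partial A_\epsilon$, the divergence theorem applied to the field $(\epsilon-f)\,y$ gives $(n-1)\int_{A_\epsilon}(\epsilon-f)\,dy=\int_{A_\epsilon}\langle\nabla f,y\rangle\,dy$, so it is enough to bound $\langle\nabla f,y\rangle\le|\nabla f|\,|y|$ pointwise against $\sqrt{1+|\nabla f|^2}-1$. On $\{|\nabla f|\le1\}$ the bound $|y|\le2\r|\nabla f|$ yields $\langle\nabla f,y\rangle\le2\r|\nabla f|^2\le2\r(1+\sqrt2)\bigl(\sqrt{1+|\nabla f|^2}-1\bigr)$, while on $\{|\nabla f|\ge1\}$ one combines $|y|\le\sqrt{2\r\epsilon}$ with $\sqrt{1+t^2}-1\ge(\sqrt2-1)t$ $(t\ge1)$ to get $\langle\nabla f,y\rangle\le\tfrac{\sqrt{2\r\epsilon}}{\sqrt2-1}\bigl(\sqrt{1+|\nabla f|^2}-1\bigr)$. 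Integrating over $A_\epsilon$ gives \eqref{deltaV} with an explicit $C(\Omega)$.

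For \eqref{deltaW} I would expand $|x|^2=\r^2+2\r(\langle x,e\rangle-\r)+|x-\xm|^2$ and split $\Delta W$ accordingly. The constant term contributes exactly $\r^2\Delta P$. The linear term contributes $2\r B$, where, since $\langle x,e\rangle-\r$ equals $-\epsilon$ on $A_\epsilon$ and $-f$ on $\Sigma_\epsilon$,
\begin{equation*}
B=\int_{A_\epsilon}\bigl(f\sqrt{1+|\nabla f|^2}-\epsilon\bigr)\,dy=\int_{A_\epsilon}f\bigl(\sqrt{1+|\nabla f|^2}-1\bigr)\,dy-\int_{A_\epsilon}(\epsilon-f)\,dy .
\end{equation*}
As $0\le f\le\epsilon$, the first integral is at most $\epsilon|\Delta P|=o(\Delta P)$ and the second equals $|\Delta V|$, so $B=\Delta V+o(\Delta P)$. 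Finally the quadratic term is negligible: writing $|x-\xm|^2=|y|^2+f^2$ on $\Sigma_\epsilon$ and $|y|^2+\epsilon^2$ on $A_\epsilon$, and using $|y|^2\le2\r\epsilon$ and $f\le\epsilon$, each of its pieces is $o(\Delta P)+o(\Delta V)$. Collecting the three contributions gives $\Delta W=\r^2\Delta P+2\r\Delta V+o(\Delta P)+o(\Delta V)$, which is even stronger than \eqref{deltaW}.

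The step I expect to be the true obstacle is \eqref{deltaV}: the ratio $|\Delta V|/|\Delta P|$ would blow up for a profile flatter than a sphere (for instance $f\sim|y|^4$), so the estimate hinges entirely on the quadratic lower bound for $f$, i.e.\ on the fact that $\xm$ realises the maximal distance $\r(\Omega)$ from the origin and hence that $\partial\Omega$ curves at least as fast as $\partial B_\r$. A secondary point to be handled carefully is regularity: $\Omega$ is merely convex, but $f$ is convex, hence locally Lipschitz with $\nabla f\in BV$, so all the integrations by parts localise near $\xm$ and are justified, the differentiability inequalities being used at Lebesgue points of $\nabla f$.
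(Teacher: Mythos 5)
Your proposal is correct, and for \eqref{deltaV} it takes a genuinely different route from the paper, while your treatment of \eqref{deltaW} is the paper's computation in slightly different algebraic clothing: the authors write $-\Delta W=\int_{A'_{\epsilon}}\bigl(g^2-(\r(\Omega)-\epsilon)^2\bigr)\sqrt{1+|Dg|^2}\,dy+\int_{A'_{\epsilon}}\bigl[(\r(\Omega)-\epsilon)^2+|y|^2\bigr]\bigl(\sqrt{1+|Dg|^2}-1\bigr)\,dy$ with $g=\r(\Omega)-f$, which is exactly your expansion of $|x|^2$ around the tangency point (your version even yields the two-sided statement $\Delta W=2\r(\Omega)\Delta V+\r^2(\Omega)\Delta P+o(\Delta P)+o(\Delta V)$). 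The real divergence is in \eqref{deltaV}. The paper proceeds by the cone estimate $|\Delta V|\ge \epsilon\,{\mathcal L}^{n-1}(A'_\epsilon)/n$ (concavity of $h=\epsilon-f$), then a Sobolev--Poincar\'e inequality combined with the diameter bound $\bigl({\mathcal L}^{n-1}(A'_\epsilon)\bigr)^{2/(n-1)}\le 2\r(\Omega)\epsilon\,\omega_{n-1}^{2/(n-1)}$ to obtain $|\Delta V|\le C\int_{A'_\epsilon}|Dh|^2$, and finally matches this against $|\Delta P|\ge K(\Omega)\int_{A'_\epsilon}|Dg|^2$; that last step tacitly requires a uniform Lipschitz bound on the profile near $\xm(\Omega)$ (valid for small $\epsilon$ since $0\in\Omega$), because $\sqrt{1+t^2}-1\ge Kt^2$ only holds on bounded ranges of $t$. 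You replace all of this by the exact divergence identity $(n-1)\int_{A_\epsilon}(\epsilon-f)\,dy=\int_{A_\epsilon}\langle\nabla f,y\rangle\,dy$ together with the pointwise comparison coming from the sphere-tangency bound $f(y)\ge |y|^2/(2\r(\Omega))$, hence $|\nabla f(y)|\ge |y|/(2\r(\Omega))$, split at $|\nabla f|=1$. Your route buys an explicit constant (roughly $2(1+\sqrt2)\r(\Omega)/(n-1)$ for small $\epsilon$), avoids the Sobolev embedding entirely, dispenses with the $L^\infty$ gradient bound since the steep set $\{|\nabla f|\ge 1\}$ is absorbed into an $O(\sqrt{\epsilon})\,|\Delta P|$ term, and makes explicit where the maximality of $|\xm(\Omega)|$ enters (the quadratic lower bound on $f$, which the paper exploits only in the weaker form of the diameter bound on $A_\epsilon$); the paper's route, in exchange, needs only the diameter information and standard functional inequalities. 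Your regularity remarks are adequate: $f$ is convex, hence Lipschitz on $\overline{A_\epsilon}\subset\subset\{|y|<\raggio_0\}$ for small $\epsilon$ and differentiable a.e., $A_\epsilon$ is convex and so of finite perimeter, and $f\equiv\epsilon$ on $\partial A_\epsilon$, which justifies the Gauss--Green identity for the Lipschitz field $(\epsilon-f)y$.
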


\begin{proof}
By definition of $\xm(\Omega)$ we have that $\Omega \subseteq B_{\r(\Omega)}$ (the ball centered at the origin with radius $\r(\Omega)$) and $A_{\epsilon} \subseteq  B_{\r(\Omega)} \cap \partial T_{\epsilon}$.
Then
\begin{equation}
\label{diam}
\dfrac{{\rm diam } A_{\epsilon}}{2} \leq \sqrt{2 \r(\Omega) \epsilon}.
\end{equation}

We rotate the coordinate in such a way that the $x_n$ axis  lies in the direction of the outer normal to $T_{\epsilon}$. We denote by $A'_{\epsilon}\subset \R^{n-1}$ the projection of $A_{\epsilon}$ onto the subspace $\{x_n=0\}$ and $g(\cdot):A'_{\epsilon}\to \R$ the concave function describing  $\partial \Omega \setminus \partial \Omega_{\epsilon}$
(see Figure \ref{figura}). By construction $g(0)=\r(\Omega)$. For all $y\in A'_\epsilon$ we set $h(y)= g(y)-(\r(\Omega) -\epsilon)$ and we have
\[
\max h = \epsilon =h(0).
\]

\begin{figure}
\centering
\def\svgwidth{\textwidth}
\begingroup%
  \makeatletter%
  \providecommand\color[2][]{%
    \errmessage{(Inkscape) Color is used for the text in Inkscape, but the package 'color.sty' is not loaded}%
    \renewcommand\color[2][]{}%
  }%
  \providecommand\transparent[1]{%
    \errmessage{(Inkscape) Transparency is used (non-zero) for the text in Inkscape, but the package 'transparent.sty' is not loaded}%
    \renewcommand\transparent[1]{}%
  }%
  \providecommand\rotatebox[2]{#2}%
  \ifx\svgwidth\undefined%
    \setlength{\unitlength}{990.01278815bp}%
    \ifx\svgscale\undefined%
      \relax%
    \else%
      \setlength{\unitlength}{\unitlength * \real{\svgscale}}%
    \fi%
  \else%
    \setlength{\unitlength}{\svgwidth}%
  \fi%
  \global\let\svgwidth\undefined%
  \global\let\svgscale\undefined%
  \makeatother%
  \begin{picture}(1,0.46808394)%
    \put(0,0){\includegraphics[width=\unitlength,page=1]{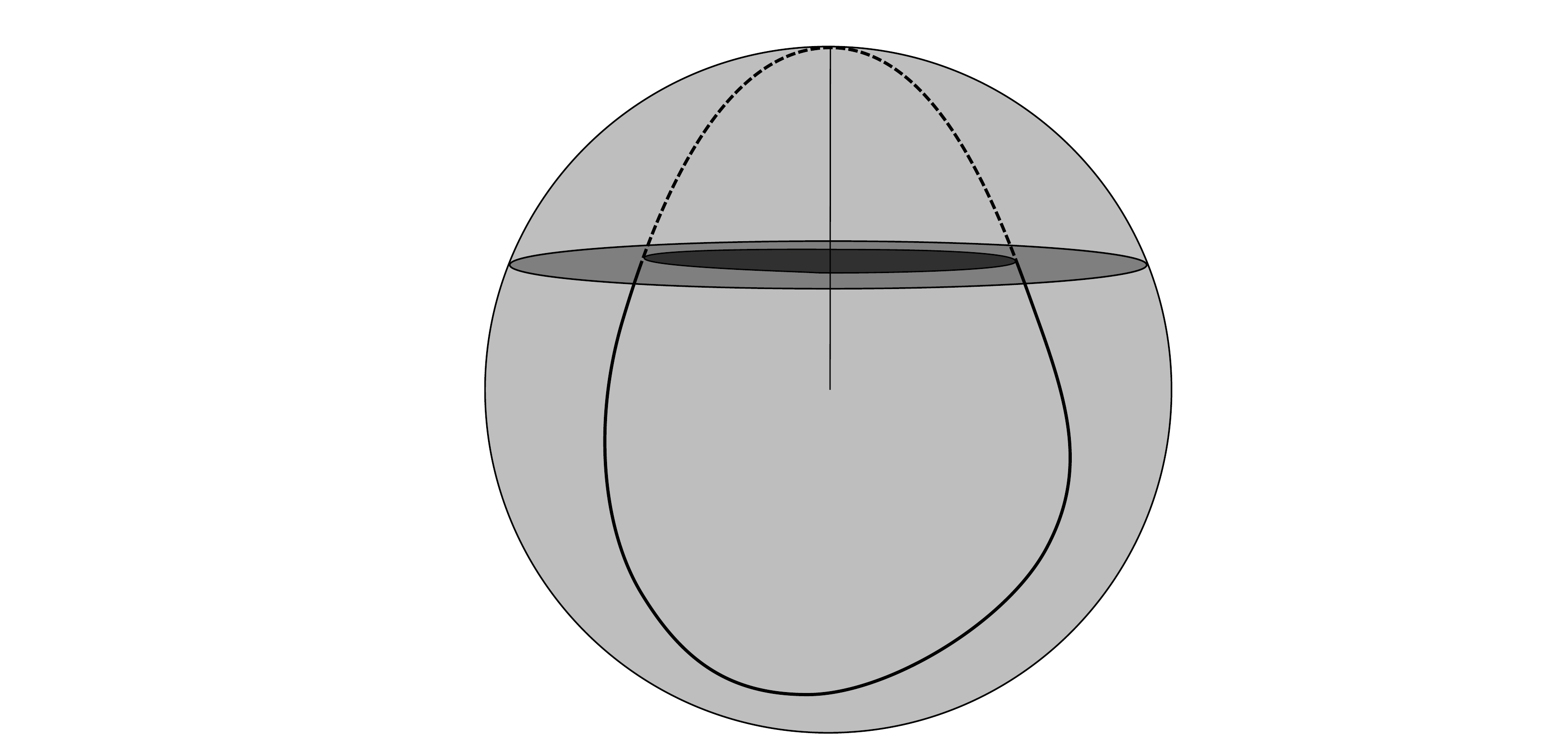}}%
    \put(0.52470852,0.44966561){\color[rgb]{0,0,0}\makebox(0,0)[lb]{\smash{$x_{max}$}}}%
    \put(0.56610041,0.44446968){\color[rgb]{0,0,0}\makebox(0,0)[lb]{\smash{}}}%
    \put(0.49583088,0.36603487){\color[rgb]{0,0,0}\makebox(0,0)[lb]{\smash{$\epsilon$}}}%
    \put(0.63561207,0.35992455){\color[rgb]{0,0,0}\makebox(0,0)[lb]{\smash{$h(y)$}}}%
    \put(0,0){\includegraphics[width=\unitlength,page=2]{h.pdf}}%
    \put(0.56046065,0.25723){\color[rgb]{0,0,0}\makebox(0,0)[lb]{\smash{$A_{\epsilon}$}}}%
    \put(0.50278364,0.20290334){\color[rgb]{0,0,0}\makebox(0,0)[lb]{\smash{$O$}}}%
    \put(0,0){\includegraphics[width=\unitlength,page=3]{h.pdf}}%
    \put(0.09535587,0.20514743){\color[rgb]{0,0,0}\makebox(0,0)[lb]{\smash{$\partial T_\epsilon$}}}%
  \end{picture}%
\endgroup%
\caption{The dashed line corresponds to the portion of the boundary of $\Omega$ which is described by the function $g$.}\label{figura}
\end{figure}

By the concavity of $h$ its graph is above the cone with basis $A'_\epsilon$ and vertex $\epsilon$, then %$\xm(\Omega)$, then 

\begin{equation}
\label{concavity}
-\Delta V= |\Delta V| = \dint_{A'_{\epsilon}} h(y) \,dy \ge \epsilon \dfrac{{\mathcal L^{n-1}}(A'_\epsilon)}{n}.
\end{equation}

Moreover  \eqref{diam}, \eqref{concavity} and Sobolev Poincar\`e inequality yield
\begin{equation}
\begin{array}{ll}
&|\Delta V| = \dint_{A'_{\epsilon}} h(y) \,dy \le \left (\dint_{A'_{\epsilon}} h(y) \,dy\right)^2 \dfrac {n}{\epsilon {\mathcal L^{n-1}}(A'_\epsilon)} \le \\ \\
& \le C(n) \dfrac{ ({\mathcal L^{n-1}}(A'_\epsilon))^{2/(n-1)}}{ \epsilon} \dint_{A'_{\epsilon}} |D h|^2 \, dy \le C(n) 2 \r(\Omega) (\omega_{n-1})^{2/(n-1)} \dint_{A'_{\epsilon}} |D h|^2.
\end{array}
\end{equation}

Since
\[
-\Delta P=|\Delta P| = \dint_{A'_{\epsilon}} (\sqrt{1+|D g|^2}-1 ) \, dy \ge K(\Omega) \dint_{A'_{\epsilon}} |D g|^2
\]
the proof of \eqref{deltaV} is completed.

On the other hand
\[
-\Delta W= \int_{A'_{\epsilon}}(g^2(y)+|y|^2) \sqrt{1+|Dg|^2} \, dy - \int_{A'_{\epsilon}} [(\r(\Omega) -\epsilon)^2 +|y|^2] \,dy = 
\]
\[
=\int_{A'_{\epsilon}}(g^2(y)-(\r(\Omega) -\epsilon)^2 ) \sqrt{1+|Dg|^2} \, dy + \int_{A'_{\epsilon}} [(\r(\Omega) -\epsilon)^2 +|y|^2] \sqrt{1+|Dg|^2}-1) \,dy \ge
\]
\[
-2 \r(\Omega) \Delta V - \r(\Omega)^2 \Delta P + o(\Delta V) +o(\Delta P) 
\]
and the proof of \eqref{deltaW} is completed.
\end{proof}

\begin{proposition}
\label{prop2}
Let $\Omega$ be a bounded, open convex set of $\rn$ such that
\begin{equation}
\label{maggiore}
E(\Omega) >0,
\end{equation}
then $\Omega$ is not a minimizer of $\lambda(\cdot)$.
\end{proposition}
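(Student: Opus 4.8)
The plan is to exploit the cropping construction set up above and to show that, for all sufficiently small $\epsilon>0$, the set $\Omega_\epsilon=\Omega\cap T_\epsilon$ satisfies $\lambda(\Omega_\epsilon)<\lambda(\Omega)$; producing such a competitor immediately shows that $\Omega$ is not a minimizer. I first note that \eqref{maggiore} guarantees that $\Omega$ is not a ball centered at the origin, since $E(\Omega)=0$ for every such ball. Recalling $\lambda(\Omega)=W(\Omega)\,P(\Omega)^{-1}V(\Omega)^{-2/n}$, I would linearize $\lambda$ in the small increments $\Delta W$, $\Delta V$, $\Delta P$ produced by the cut: since $f(W,P,V)=W\,P^{-1}V^{-2/n}$ is smooth near the (positive) values attained by $\Omega$, one obtains
\begin{equation}\label{planexp}
\lambda(\Omega_\epsilon)-\lambda(\Omega)=\lambda(\Omega)\left[\frac{\Delta W}{W}-\frac{\Delta P}{P}-\frac2n\frac{\Delta V}{V}\right]+o\big(|\Delta W|+|\Delta V|+|\Delta P|\big),
\end{equation}
all geometric quantities on the right being those of $\Omega$.

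The next step is to feed the estimate \eqref{deltaW} of Lemma \ref{reverse} into the bracket. Dividing \eqref{deltaW} by $W>0$ and grouping the contributions of $\Delta V$ and $\Delta P$, the bracket in \eqref{planexp} is bounded above by
\begin{equation}\label{plangroup}
\left(\frac{2\,\r(\Omega)}{W}-\frac2{nV}\right)\Delta V+\left(\frac{\r(\Omega)^2}{W}-\frac1P\right)\Delta P+o\big(|\Delta V|+|\Delta P|\big).
\end{equation}
Moreover \eqref{deltaW} combined with \eqref{deltaV} gives $|\Delta W|\le C\big(|\Delta V|+|\Delta P|\big)$, so the remainder in \eqref{planexp} is in fact $o(|\Delta V|+|\Delta P|)$ and may be merged into the one already displayed in \eqref{plangroup}.

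The crux is the sign of the two coefficients. The coefficient of $\Delta V$ is $\tfrac2W\big(\r(\Omega)-\tfrac{W}{nV}\big)=\tfrac2WE(\Omega)$, strictly positive by \eqref{maggiore}. The coefficient of $\Delta P$ is $\big(\r(\Omega)^2P-W\big)/(WP)$; since $|x|\le\r(\Omega)$ on $\partial\Omega$ one has $W=\int_{\partial\Omega}|x|^2\,\ds\le\r(\Omega)^2P$, with equality only if $|x|\equiv\r(\Omega)$ on $\partial\Omega$, i.e. only for a ball centered at the origin, so this coefficient is strictly positive for our $\Omega$ as well. On the other hand the cut removes mass, whence $\Delta V<0$ and, by the convexity computation $-\Delta P=\int_{A'_\epsilon}(\sqrt{1+|Dg|^2}-1)\,dy>0$ carried out in Lemma \ref{reverse}, also $\Delta P<0$. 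Therefore the two leading terms of \eqref{plangroup} are strictly negative, of magnitude comparable to $|\Delta V|+|\Delta P|$, and dominate the $o(|\Delta V|+|\Delta P|)$ remainder once $\epsilon$ is taken small enough. Hence the bracket in \eqref{planexp} is negative and $\lambda(\Omega_\epsilon)<\lambda(\Omega)$ for small $\epsilon$, so $\Omega$ is not a minimizer of $\lambda(\cdot)$, as claimed.

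I expect the only real difficulty to be bookkeeping of orders rather than a new idea: one must verify that the linearization \eqref{planexp} is legitimate and that its remainder truly lies below the leading terms. This is precisely what Lemma \ref{reverse} delivers, since \eqref{deltaV} pins $\Delta V$ and $\Delta P$ to a common order while \eqref{deltaW} bounds $\Delta W$ by them with a strictly negative principal part; the two structural inequalities $E(\Omega)>0$ and $W<\r(\Omega)^2P$ then supply the strictly positive coefficients needed to make the negative increments $\Delta V,\Delta P$ decrease $\lambda$.
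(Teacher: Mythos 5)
Your proof is correct and follows the paper's own argument essentially verbatim: the same linearization of $\lambda$ in $\Delta W,\Delta V,\Delta P$ (the paper's \eqref{deltaLam}), the same substitution of the one-sided bound \eqref{deltaW}, the same identification of the two coefficients $\tfrac{2}{W(\Omega)}E(\Omega)>0$ and $\bigl(\r^2(\Omega)P(\Omega)-W(\Omega)\bigr)/\bigl(W(\Omega)P(\Omega)\bigr)>0$ (the latter strict because \eqref{maggiore} rules out a ball centered at the origin), and the same signs $\Delta V<0$, $\Delta P<0$. One pedantic caveat: the upper bound $|\Delta W|\le C\bigl(|\Delta V|+|\Delta P|\bigr)$ does not follow from \eqref{deltaW} combined with \eqref{deltaV} as you claim, since \eqref{deltaW} bounds $\Delta W$ only from above (hence bounds $|\Delta W|$ from \emph{below}); the needed estimate is instead immediate from the explicit expression for $\Delta W$ in the proof of Lemma \ref{reverse}, using $h\le\epsilon$ and $(\r(\Omega)-\epsilon)^2+|y|^2\le\r^2(\Omega)$ --- a point the paper itself glosses over when writing its remainder as $o(\Delta P)+o(\Delta V)$.
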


\begin{proof}
 
By \eqref{deltaW}
 
\begin{equation}
\label{deltaLam}
\begin{array}{ll}
&\Delta \lambda = \dfrac{1}{V(\Omega)^{2/n}P(\Omega)}  \left( \Delta W- \frac{\Delta P }{P(\Omega)} W(\Omega) -\dfrac{2}{n}  \frac{\Delta V }{V(\Omega)}W(\Omega)  \right) + o(\Delta P) + o(\Delta V) =\\ \\
& =  \dfrac{1}{V(\Omega)^{2/n}P(\Omega)}  \left[   2\left( \r(\Omega)   -  \frac{W(\Omega) }{nV(\Omega)} \right)\Delta V    + \left( \r^2(\Omega)-  \frac{W(\Omega)}{P(\Omega)}  \right) \Delta P\right]
+ o(\Delta P) + o(\Delta V) =\\ \\
&=  \dfrac{1}{V(\Omega)^{2/n}P(\Omega)}  \left[   2 \,E(\Omega) \,\Delta V    + \left( \r^2(\Omega)-  \frac{W(\Omega)}{P(\Omega)}  \right) \Delta P\right]
+ o(\Delta P) + o(\Delta V). 
\end{array}
\end{equation}
\smallskip
Observing that $\eqref{maggiore}$ implies that $\Omega$ is not a ball centered at the origin and consequently that $\r^2(\Omega)-  \frac{W(\Omega)}{P(\Omega)}>0$, 
since  $\Delta V <0$ and $\Delta P<0$  we have 
 
\[
 \Delta \lambda <0
 \]
 which concludes the proof.
\end{proof}

\subsection{Step 4. Balls are the unique minimizers having vanishing Excess.}

Again the idea is that one can crop the set $\Omega$ by cutting with an hyperplane to lower $\lambda$. 
\begin{proposition}
\label{prop3}
Let $\Omega$ be a bounded, open convex set of $\rn$ such that
\begin{equation}
\label{uguale}
E(\Omega) = 0,
\end{equation}
then either $\Omega$ is a ball (centered at the origin) or it is not a minimizer of $\lambda(\cdot)$.
\end{proposition}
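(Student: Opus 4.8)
The plan is to re-use verbatim the hyperplane cropping $\Omega_\epsilon=\Omega\cap T_\epsilon$ and the first–order expansion \eqref{deltaLam} already set up in Step 3, and simply to read off what survives when $E(\Omega)=0$. I would argue by contradiction: assume $\Omega$ is a minimizer of $\lambda(\cdot)$ with $E(\Omega)=0$ which is \emph{not} a ball centered at the origin, and then produce a competitor of strictly smaller $\lambda$, namely the cropped set $\Omega_\epsilon$.

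First I would isolate the inequality that plays here the role that \eqref{maggiore} played in Proposition \ref{prop2}. Since $\r^2(\Omega)=\max_{\partial\Omega}|x|^2$ while $W(\Omega)/P(\Omega)$ is exactly the $\mathcal H^{n-1}$–average of $|x|^2$ over $\partial\Omega$, one always has $\r^2(\Omega)\ge W(\Omega)/P(\Omega)$, with equality if and only if $|x|^2$ is constant $\mathcal H^{n-1}$–a.e.\ on $\partial\Omega$, i.e.\ if and only if $\partial\Omega$ lies on a sphere centered at the origin. Hence, under the standing assumption that $\Omega$ is not a centered ball,
\[
\r^2(\Omega)-\frac{W(\Omega)}{P(\Omega)}>0 .
\]

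Next I would insert $E(\Omega)=0$ into \eqref{deltaLam}. The volume term $2\,E(\Omega)\,\Delta V$ then drops out and the expansion reduces to
\[
\Delta\lambda \le \frac{1}{V(\Omega)^{2/n}P(\Omega)}\left(\r^2(\Omega)-\frac{W(\Omega)}{P(\Omega)}\right)\Delta P+o(\Delta P)+o(\Delta V).
\]
By the comparison $|\Delta V|\le C(\Omega)\,|\Delta P|$ of \eqref{deltaV}, the term $o(\Delta V)$ is absorbed into $o(\Delta P)$; and since intersecting a convex set with a halfspace strictly lowers the perimeter — the flat face $A_\epsilon$ has smaller area than the cap it replaces — we have $\Delta P<0$. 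Combined with the strict positivity of the coefficient above, this forces $\Delta\lambda<0$ for all sufficiently small $\epsilon>0$, i.e.\ $\lambda(\Omega_\epsilon)<\lambda(\Omega)$, contradicting minimality. Thus a minimizer with vanishing Excess must be a centered ball, which is the assertion.

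Since the delicate analytic work — the cropping bounds \eqref{deltaV}–\eqref{deltaW} and the expansion \eqref{deltaLam} — is already available from Step 3, the only genuinely new point, and the one I would watch most carefully, is structural: unlike $E>0$ or $E<0$, the hypothesis $E(\Omega)=0$ does \emph{not} by itself exclude balls (a centered ball has $E=0$), so the conclusion must remain conditional, and one must check that for non-balls the single surviving $\Delta P$–term is strictly negative to leading order, i.e.\ that its coefficient $\r^2(\Omega)-W(\Omega)/P(\Omega)$ does not also degenerate. Identifying the equality case $\r^2=W/P$ with centered balls is precisely what guarantees this, so no argument beyond that observation is required.
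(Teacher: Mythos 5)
Your proposal is correct and follows essentially the same route as the paper: substitute $E(\Omega)=0$ into \eqref{deltaLam}, absorb $o(\Delta V)$ into $o(\Delta P)$ via \eqref{deltaV}, and conclude from $\Delta P<0$ unless the coefficient $\r^2(\Omega)-W(\Omega)/P(\Omega)$ vanishes, which characterizes centered balls. If anything, you are slightly more careful than the paper, correctly writing the expansion as an inequality (since \eqref{deltaW} is an inequality) and spelling out why $\r^2(\Omega)=W(\Omega)/P(\Omega)$ forces $\partial\Omega$ to lie on a centered sphere.
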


\begin{proof}
Using  \eqref{deltaLam} \eqref{uguale} and \eqref{deltaV}  we have
\[
\Delta \lambda = \dfrac{1}{V(\Omega)^{2/n}P(\Omega)}  \left[\left( \r^2(\Omega)-  \frac{W(\Omega)}{P(\Omega)}  \right) \right]  \Delta P + o( \Delta P)
\]
\end{proof}
Then either $\r^2(\Omega)=  \frac{W(\Omega)}{P(\Omega)}$, which implies that $\Omega$ is a ball centered at the origin, or $\Delta \lambda <0$ and $\Omega$ is not a minimizer.
This concludes the proof.

\begin{proof}[Proof of Theorem \ref{main}]
The proof follows immediately by Propositions \ref{existence}, \ref{prop1}, \ref{prop2}, \ref{prop3}.
\end{proof}

\begin{remark}
Our inequality \eqref{bf01} is sharp, in the sense that one can not decrease the power of the volume while increasing the power of the perimeter term in \eqref{ratio}. Indeed, let $n=2$ and consider the functionals $\lambda_{\gamma}(\Omega) = \frac{W(\Omega)}{P(\Omega)^{1+\gamma}V(\Omega)^{1-\gamma/2}}$ with $\gamma >0$.
We show that for such functionals balls are {\it  not} minimizers among convex sets. Let ${\mathcal P}_k$ be a regular polygon with $k$ sides, inradius $1$ and let $\alpha = \frac\pi k$.
Then 
\begin{equation}
\begin{array}{ll}
&P({\mathcal P}_k) = 2 \pi \dfrac{ \tan \alpha}{\alpha}; \,V({\mathcal P}_k) = \pi \dfrac{ \tan \alpha}{\alpha}; \, W({\mathcal P}_k) = \dfrac {2\pi}{\alpha}( \tan \alpha+ \dfrac{ \tan^3 \alpha}{3})\\\\
&\lambda_{\gamma}({\mathcal P}_k)= \dfrac{1}{2^{\gamma} \pi^{1+ \gamma/2}} \dfrac{1+\alpha^2/3+o(\alpha^2) }{1+\alpha^2/3(1+\gamma/2)+o(\alpha^2) }.
\end{array}
\end{equation}
So, for $k$ sufficiently large,
\begin{equation*}
\lambda_{\gamma}({\mathcal P}_k) <\dfrac{1}{2^{\gamma} \pi^{1+ \gamma/2}} = \lambda_{\gamma}(B).
\end{equation*}

\end{remark}

\section{Isoperimetric inequalities for Steklov and Wentzell eigenvalues}

 In this section, we show how Theorem \ref{main} leads to new isoperimetric inequalities for the Steklov and Wentzell eigenvalues. 

\begin{theorem}[Weinstock inequality in higher dimensions]
\label{Steklov}
Let $\Omega$ be a bounded, open convex set of $\rn$. Then

\begin{equation}
\label{stek}
\sigma(\Omega) \leq \sigma(\Omega^\star)
\end{equation}
where $\Omega^\star$ is a ball such that $P(\Omega) = P(\Omega^\star)$. Equality holds only if $\Omega$ is a ball.
\end{theorem}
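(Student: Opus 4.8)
The plan is to combine the classical Weinstock--Brock test-function scheme with the sharp momentum estimate of Theorem~\ref{main} and the classical isoperimetric inequality.

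Since $\sigma$, $P$ and $V$ are invariant under translations, I would first translate $\Omega$ so that $\int_{\partial\Omega} x \,\ds = 0$. This makes each coordinate function $v_i(x)=x_i$ an admissible competitor in \eqref{bf03}, because the constraint $\int_{\partial\Omega} v_i\,\ds = 0$ is then satisfied. Inserting $v_i=x_i$ into the Rayleigh quotient and using that $x_i$ is harmonic with $|\nabla x_i|^2\equiv 1$ (so $\int_\Omega|\nabla x_i|^2\,dx = V(\Omega)$), I get $\sigma(\Omega)\int_{\partial\Omega}x_i^2\,\ds \le V(\Omega)$ for each $i$. Summing over $i=1,\dots,n$ and using $\sum_i x_i^2=|x|^2$ yields
\[
\sigma(\Omega)\,W(\Omega)\le n\,V(\Omega),\qquad\text{i.e.}\qquad \sigma(\Omega)\le \frac{n\,V(\Omega)}{W(\Omega)}.
\]

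At this stage the only geometry left is the ratio $V/W$, which is precisely what Theorem~\ref{main} controls: the inequality $\lambda(\Omega)\ge\omega_n^{-2/n}$ rewrites as $W(\Omega)\ge \omega_n^{-2/n}P(\Omega)V(\Omega)^{2/n}$, and substituting gives the scale-invariant refinement
\[
\sigma(\Omega)\,\frac{P(\Omega)}{V(\Omega)^{(n-2)/n}}\le n\,\omega_n^{2/n}.
\]
A one-line computation on a ball $B=B_R$ (using $\sigma(B)=1/R$, $P(B)=n\omega_n R^{n-1}$, $V(B)=\omega_n R^n$) shows that the right-hand side equals $\sigma(B)P(B)/V(B)^{(n-2)/n}$, which is the sharper form announced in the introduction. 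To descend to the stated perimeter-normalized inequality I would then invoke the classical isoperimetric inequality: since $(n-2)/n\ge 0$, the last bound is nondecreasing in $V(\Omega)$ at fixed $P(\Omega)$, and for the ball $\Omega^\star$ with $P(\Omega^\star)=P(\Omega)$ one has $V(\Omega)\le V(\Omega^\star)$, whence
\[
\sigma(\Omega)\le n\omega_n^{2/n}\frac{V(\Omega)^{(n-2)/n}}{P(\Omega)}\le n\omega_n^{2/n}\frac{V(\Omega^\star)^{(n-2)/n}}{P(\Omega^\star)}=\sigma(\Omega^\star).
\]

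For the rigidity statement I would read the chain backwards: equality in \eqref{stek} forces equality in the final step, i.e.\ $V(\Omega)=V(\Omega^\star)$ at fixed perimeter, so by the rigidity of the isoperimetric inequality $\Omega$ must be a ball (centered at the origin after the normalizing translation, in agreement with the equality case of Theorem~\ref{main}). As for difficulty, the genuinely hard content is Theorem~\ref{main}, which is already established; granting it, the argument above is essentially routine. The only points demanding care are the admissibility of the coordinate functions---secured by the centering translation---and combining the two inequalities in the correct monotonicity direction via the sign of $(n-2)/n$, together with the bookkeeping of the equality cases.
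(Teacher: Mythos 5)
Your proposal is correct and follows essentially the same route as the paper's proof: translating so that $\partial\Omega$ has barycenter at the origin, testing \eqref{bf03} with the coordinate functions $x_i$ to get $\sigma(\Omega)\le nV(\Omega)/W(\Omega)$, then applying Theorem~\ref{main} and the classical isoperimetric inequality to reach $\sigma(\Omega^\star)$. Your treatment of the equality case (and of the intermediate scale-invariant bound $\sigma(\Omega)P(\Omega)/V(\Omega)^{(n-2)/n}\le n\omega_n^{2/n}$) is in fact slightly more explicit than the paper's, which leaves both to remarks.
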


\begin{proof}
Since all the quantities involved are invariant under translations, we can assume that $\partial \Omega$ has the origin as barycenter.
Arguing as in \cite{Br, BDP16, BDPR12}, choosing $x_i$ $i=1, \cdots, n$ as test function in \eqref{bf03} and summing up we have
\[
\sigma(\Omega) \leq \dfrac{nV(\Omega)}{W(\Omega)}
\]
using theorem \ref{main} and the classical isoperimetric inequality we have

\[
\sigma(\Omega) \leq n \omega_n^{2/n} \dfrac{V(\Omega)^{\frac{n-2}{n}}}{P(\Omega)} \le \left[\dfrac{n\omega_n}{P(\Omega)}\right]^{1/(n-1)}= \sigma(\Omega^\star).
\]

\end{proof}

\begin{remark}
The proof above shows in fact the stronger inequality
$$\sigma(\Om) \frac{P(\Om )}{V(\Om) ^\frac{n-2}{n} }\le  \sigma(B)  \frac{P(B )}{V(B) ^\frac{n-2}{n}}.$$
In dimension $n\ge 3$, it was proved in \cite{CSG11} that the $k$-th Steklov eigenvalue satisfies the following bound from above
$$\sigma_k(\Om) \frac{P(\Om )}{V(\Om) ^\frac{n-2}{n} }\le C(n) k^\frac 2n,$$ 
highlighting the idea that  the ratio $\frac{P(\Om )}{V(\Om) ^\frac{n-2}{n} }$ gives sharper upper bounds than the perimeter alone. In the  notation above, the first zero Steklov eigenvalue is denoted by $\sigma_0$.
\end{remark}
\begin{remark}
As in the paper of Brock, one gets for free the following
$$\frac{V(\Om) ^\frac{n-2}{n} }{P(\Om )} \sum_{i=1}^n\frac{1}{\sigma_i(\Om)} \ge \frac{V(B_R) ^\frac{n-2}{n} }{P(B_R )} \sum_{i=1}^n\frac{1}{\sigma_i(B_R)} $$
and, as a consequence,
$$\frac{1}{P(\Om )^{\frac{1}{n-1}}} \sum_{i=1}^ n\frac{1}{\sigma_i(\Om)} \ge  \frac{1}{P(B )^{\frac{1}{n-1}}}\sum_{i=1}^n\frac{1}{\sigma_i(B_R)}.$$
\end{remark}

\begin{theorem}
\label{Wentzell}
Let $\Omega$ be a bounded, open convex set of $\rn$. Then

\begin{equation}
\label{wentz}
\mu(\Omega, \beta) \leq \mu(\Omega^\sharp, \beta)
\end{equation}
where $\Omega^\sharp$ is a ball such that $V(\Omega)=V(\Omega^\sharp)$. Equality holds only if $\Omega$ is a ball.
\end{theorem}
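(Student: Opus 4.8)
The plan is to repeat the strategy used for Theorem~\ref{Steklov}: I would test the Rayleigh quotient \eqref{bf04} with the coordinate functions and then feed the outcome into Theorem~\ref{main}. Since $V(\Omega)$, $P(\Omega)$, $W(\Omega)$ and $\mu(\Omega,\beta)$ are all invariant under translations, the first move is to translate $\Omega$ so that $\int_{\partial\Omega}x\,\ds=0$. This makes each $v=x_i$, $i=1,\dots,n$, an admissible competitor in \eqref{bf04}, since $x_i\in H^1(\Omega)$ and $\int_{\partial\Omega}x_i\,\ds=0$.

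For each such $i$ the variational definition \eqref{bf04} yields
\begin{equation*}
\mu(\Omega,\beta)\dint_{\partial\Omega}x_i^2\,\ds\le \dint_\Omega|\nabla x_i|^2\,dx+\beta\dint_{\partial\Omega}|\nabla_\tau x_i|^2\,\ds .
\end{equation*}
Here I would use that $\nabla x_i$ is the $i$-th vector of the canonical basis and that, on $\partial\Omega$, its tangential part is $\nabla_\tau x_i=\nabla x_i-\nu_i\,\nu$ with $\nu_i$ the $i$-th component of $\nu$, so that $|\nabla_\tau x_i|^2=1-\nu_i^2$. Summing over $i$ and using $\sum_i\nu_i^2=1$ gives $\sum_i\int_\Omega|\nabla x_i|^2\,dx=nV(\Omega)$, $\sum_i\int_{\partial\Omega}|\nabla_\tau x_i|^2\,\ds=(n-1)P(\Omega)$ and $\sum_i\int_{\partial\Omega}x_i^2\,\ds=W(\Omega)$, hence
\begin{equation*}
\mu(\Omega,\beta)\le \frac{nV(\Omega)+\beta(n-1)P(\Omega)}{W(\Omega)}.
\end{equation*}

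To conclude I would estimate the two terms on the right separately. Writing $R$ for the radius of $\Omega^\sharp$, so that $V(\Omega)=\omega_n R^n$, the coordinate functions on a ball are genuine Wentzell eigenfunctions and give $\mu(\Omega^\sharp,\beta)=\frac1R+\frac{\beta(n-1)}{R^2}$. Theorem~\ref{main} in the form $W(\Omega)\ge\omega_n^{-2/n}P(\Omega)V(\Omega)^{2/n}$ bounds the $\beta$-term by $\beta(n-1)\omega_n^{2/n}V(\Omega)^{-2/n}=\beta(n-1)/R^2$; the same inequality combined with the classical isoperimetric inequality $P(\Omega)\ge n\omega_n^{1/n}V(\Omega)^{(n-1)/n}$ bounds the first term $nV(\Omega)/W(\Omega)$ by $1/R$, which is exactly the chain already used to prove \eqref{stek}. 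Adding the two estimates gives $\mu(\Omega,\beta)\le\mu(\Omega^\sharp,\beta)$.

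The proof is short once Theorem~\ref{main} is in hand, so I do not expect a genuine analytic obstacle; the only points requiring care are the tangential-gradient identity $|\nabla_\tau x_i|^2=1-\nu_i^2$ and the bookkeeping forced by the fact that $\mu(\cdot,\beta)$ is \emph{not} scale invariant. This non-homogeneity is precisely why one must split the right-hand side into a ``volume'' part (controlled by Theorem~\ref{main} together with the isoperimetric inequality) and a ``perimeter'' part (controlled by Theorem~\ref{main} alone), each matching one of the two terms of $\mu(\Omega^\sharp,\beta)$. For the equality case, both component inequalities are saturated only on balls: the rigidity statement of Theorem~\ref{main} forces $\Omega$ to be a ball centered at the origin, and on such a ball the coordinate functions realize the minimum in \eqref{bf04}, so equality in \eqref{wentz} holds precisely when $\Omega$ is a ball.
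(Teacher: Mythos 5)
Your proposal is correct and follows essentially the same route as the paper: after translating so that $\int_{\partial\Omega}x\,\ds=0$, test \eqref{bf04} with the coordinate functions, sum, and then apply Theorem \ref{main} together with the classical isoperimetric inequality to identify the resulting bound with $\mu(\Omega^\sharp,\beta)$, with the equality case forced by the rigidity in Theorem \ref{main}. In fact your bookkeeping is slightly more careful than the paper's: the tangential term correctly carries the factor $(n-1)$, since $\sum_i|\nabla_\tau x_i|^2=n-1$, so the bound reads $\mu(\Omega,\beta)\le \bigl(nV(\Omega)+\beta(n-1)P(\Omega)\bigr)/W(\Omega)$, matching $\mu(\Omega^\sharp,\beta)=\frac1R+\frac{\beta(n-1)}{R^2}$, whereas \eqref{bf07} as printed omits that factor --- an evident typo in the paper, since the final identification with $\mu(\Omega^\sharp,\beta)$ requires it.
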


\begin{proof}
Since all the quantities involved are invariant under translations, we can assume that $\partial \Omega$ has the origin as barycenter.
Arguing as in \cite{Br,DKL16}, choosing $x_i$ $i=1, \cdots, n$ as test function in \eqref{bf04} and summing up we have

\begin{equation}\label{bf07}
\mu(\Omega, \beta) \leq \dfrac{ n V(\Omega)+ \beta P(\Omega)}{W(\Omega)}
\end{equation}

using theorem \ref{main} and the classical isoperimetric inequality we have

\[
\mu(\Omega, \beta) \leq \omega_n^{2/n} \dfrac{  n V(\Omega)+ \beta P(\Omega)}{P(\Omega) V(\Omega)^{2/n}} =  \omega_n^{2/n}\left[ n \dfrac{(V(\Omega))^{(1-2/n)}}{P(\Omega)} + \beta V(\Omega)^{-2/n} \right] = \mu(\Omega^\sharp, \beta).
\]

\end{proof}

\begin{remark}\label{bf15}
A natural question is the maximization of $\mu(\Om, \beta)$ under a surface area constraint, precisely to check wether a similar result as Theorem \ref{wentz} holds. Recall that $\mu(\Om, \beta)$ does not behave homogeneously neither in $\beta$ nor to rescalings. Consequently, for a fixed set $\Om$, when $\beta \ra 0_+$ one gets $\mu(\Om, \beta) \ra \sigma(\Om)$. Following Theorem \ref{Steklov} it is reasonable to expect that the ball maximizes the first Wentzell eigenvalue, under surface area constraint and convexity assumption. On the other hand, when $\beta \ra +\infty$, one has that $\frac 1\beta \mu (\Om, \beta)$ converges to the first non-zero Laplace-Beltrami eigenvalue of $\partial \Omega$. The maximum of the first Laplace-Beltrami eigenvalue under surface area constraint is the ball in $\R^3$ in the class of surfaces homeomorphic to the euclidian sphere (so including convex sets) as it was proved by Hersch \cite{He70}. If $n \ge 4$, the maximizer is not, in general, the ball. We refer the reader to \cite{CDS10} for an analysis of this question.  In the class of convex sets, the maximality of the ball under surface area constraint, remains an open question.

\end{remark}
\begin{proposition}
Let $n \ge 3$ and $c >0$ be fixed. There exists $\beta ^*$ depending only on $c$ and $n$ such that for every $\beta \in [0, \beta^*]$ and for every convex set $\Om\sq \R^n$ satisfying $P(\Om)=c$, the following inequality holds true
\begin{equation}\label{bf06}
\mu (\Om, \beta) \le \mu (\Om^\star, \beta),
\end{equation}
where $\Omega^{\star}$ is a ball such that $P(\Omega) = P(\Omega^\star)$.
\end{proposition}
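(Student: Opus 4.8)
The plan is to run a Weinstock/Brock-type argument with the extra Wentzell term, and to isolate a purely geometric competition that is controlled for small $\beta$. First I would normalise by a translation so that $\int_{\partial\Omega}x\,\ds=0$, which makes the coordinate functions $x_1,\dots,x_n$ admissible in \eqref{bf04}. Testing \eqref{bf04} with each $x_i$ and summing, exactly as in the proof of Theorem \ref{Wentzell}, gives
\[
\mu(\Omega,\beta)\le\frac{nV(\Omega)+\beta(n-1)P(\Omega)}{W(\Omega)},
\]
using $\sum_i\int_\Omega|\nabla x_i|^2=nV(\Omega)$, $\sum_i\int_{\partial\Omega}|\nabla_\tau x_i|^2=\int_{\partial\Omega}(n-1)\,\ds=(n-1)P(\Omega)$ and $\sum_i\int_{\partial\Omega}x_i^2=W(\Omega)$ (cf.\ \eqref{bf07}). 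On the ball $\Omega^\star$ the $x_i$ are genuine first eigenfunctions, so this bound is an equality there; writing $R^\star$ for the radius of $\Omega^\star$ and $W^\star=W(\Omega^\star)=c\,(R^\star)^2$, the inequality \eqref{bf06} follows once I prove the scale-fixed geometric statement
\[
g(\Omega):=\frac1{R^\star}-\frac{nV(\Omega)}{W(\Omega)}\ \ge\ \beta(n-1)c\Big(\frac1{W(\Omega)}-\frac1{W^\star}\Big)
\]
for every convex $\Omega$ with $P(\Omega)=c$. By Theorem \ref{main} and the classical isoperimetric inequality one has $g(\Omega)\ge0$, with equality \emph{only} for $\Omega=\Omega^\star$; this is exactly Theorem \ref{Steklov} and its equality case.

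If $W(\Omega)\ge W^\star$ the right-hand side is $\le0$ and there is nothing to prove, so the entire difficulty lies in the regime $W(\Omega)<W^\star$, and it suffices to show that
\[
\beta^\ast:=\inf\Big\{\frac{g(\Omega)}{(n-1)c\,(W(\Omega)^{-1}-(W^\star)^{-1})}\ :\ \Omega\ \text{convex},\ P(\Omega)=c,\ W(\Omega)<W^\star\Big\}>0.
\]
Two soft ingredients feed this. The first is a lower bound $W(\Omega)\ge W_{\min}(c,n)>0$: by Chebyshev at least half the surface measure lies in $B_{r_1}$ with $r_1=\sqrt{2W/c}$, while convexity and monotonicity of perimeter give $\h(\partial\Omega\cap B_{r_1})\le P(B_{r_1})=n\omega_n r_1^{n-1}$, so $c/2\le n\omega_n r_1^{n-1}$ and hence $W\ge\frac c2\big(\frac{c}{2n\omega_n}\big)^{2/(n-1)}$. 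The second is that, as in the proof of Proposition \ref{existence}, convexity forces $W(\Omega)\to\infty$ when $\diam(\Omega)\to\infty$ at fixed perimeter; consequently every set entering the infimum has diameter bounded by some $D(c,n)$.

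With these in hand I would argue by contradiction: if $\beta^\ast=0$, pick $\Omega_j$ in the admissible family with ratio $\to0$. Since $W_{\min}\le W(\Omega_j)<W^\star$ the denominators stay bounded, so the ratio tends to $0$ only if $g(\Omega_j)\to0$; the diameter bound rules out $\diam(\Omega_j)\to\infty$, and Blaschke selection yields (up to translation) a convex limit $\Omega_\infty$ with $P=c$ and $g(\Omega_\infty)=0$, i.e.\ $\Omega_\infty=\Omega^\star$ by the equality case above. Thus the infimum can only be approached \emph{at the ball}, and the proof reduces to a quantitative stability estimate there.

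This last point is the main obstacle. Writing $\partial\Omega$ as a normal graph $R^\star(1+\varphi)$ over $\Ss^{n-1}$, imposing $P(\Omega)=c$ and $\int_{\partial\Omega}x\,\ds=0$ (which annihilate the $\ell\le1$ modes of $\varphi$ to second order and fix its mean), and expanding $V$ and $W$ to second order, I expect
\[
g(\Omega)\approx\frac1{R^\star}\Big(\tfrac{n+2}2\langle\varphi^2\rangle+\tfrac{n-2}{2(n-1)}\langle|\nabla_\tau\varphi|^2\rangle\Big),\quad
(n-1)c\Big(\tfrac1W-\tfrac1{W^\star}\Big)\approx\frac1{(R^\star)^2}\Big(\langle|\nabla_\tau\varphi|^2\rangle-(n^2-1)\langle\varphi^2\rangle\Big),
\]
averages being over $\Ss^{n-1}$. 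Since the surviving modes have $\ell\ge2$, and in the regime under study $\langle|\nabla_\tau\varphi|^2\rangle>(n^2-1)\langle\varphi^2\rangle$, the ratio of these two quadratic forms is bounded below by $\frac{(n-2)R^\star}{2(n-1)}-o(1)$, which is positive precisely because $n\ge3$ (and degenerates at $n=2$, consistent with the hypothesis). Hence $\beta^\ast>0$ and any $\beta\in[0,\beta^\ast]$ works. The genuinely hard part is to make this expansion uniform — controlling the third-order remainder uniformly in a Hausdorff neighbourhood of $\Omega^\star$ among convex sets — so that the second-order inequality upgrades to a strict lower bound on the ratio.
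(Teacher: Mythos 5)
Your reduction of \eqref{bf06} to the geometric inequality $g(\Omega)\ge\beta(n-1)c\bigl(W(\Omega)^{-1}-W(\Omega^\star)^{-1}\bigr)$ is correct, and your preliminary estimates are sound: the Chebyshev-plus-monotonicity lower bound $W\ge W_{\min}(c,n)$ is in substance the paper's bound $W(\Om)\ge k(c,n)$ obtained via the ball $\tilde B$ of surface area $c/2$, and you even restore the factor $(n-1)$ in the tangential term that \eqref{bf07} drops. But the proof as written has a genuine gap at exactly the point you flag yourself: since both the numerator $g(\Omega)$ and the denominator $W(\Omega)^{-1}-W(\Omega^\star)^{-1}$ vanish at $\Omega^\star$, the Blaschke compactness argument only localizes the infimum \emph{at} the ball; it says nothing about its positivity, because the limit is a $0/0$ indeterminacy. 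Everything then hangs on the quantitative second-order stability estimate, which you sketch but do not prove, and which is not routine: for convex bodies, Hausdorff closeness to a ball gives only $C^0$ (at best Lipschitz, with constant $O(\sqrt\ep)$) control of the radial graph $\varphi$, so the Fuglede-type expansions of $P$, $V$, $W$ with remainders controlled \emph{uniformly} over the convex class require a genuine argument. As it stands, the proposal is an unfinished program rather than a proof.

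The paper avoids stability altogether by a dichotomy in $V(\Omega)$. Assuming \eqref{bf06} fails, the test-function bound \eqref{bf09} plus Theorem \ref{main} (with $P(\Om)=P(\Om^\star)=c$) yields $nV(\Om^\star)+\beta c\le\bigl(nV(\Om)+\beta c\bigr)\bigl(V(\Om^\star)/V(\Om)\bigr)^{2/n}$, which rearranges to \eqref{bf12}:
$$nV(\Om^\star)^{\frac 2n}V(\Om)^{\frac 2n}\;\le\;\beta c\,\frac{V(\Om^\star)^{\frac 2n}-V(\Om)^{\frac 2n}}{V(\Om^\star)^{\frac{n-2}{n}}-V(\Om)^{\frac{n-2}{n}}}.$$
The idea your approach misses is that here the isoperimetric deficit \emph{cancels}: the quotient on the right stays bounded by a constant $C(c,n)$ uniformly as $V(\Om)\to V(\Om^\star)$, its limit being $\frac{2}{n-2}V(\Om^\star)^{\frac{4-n}{n}}$ --- finite precisely because $n\ge3$ (this is where the paper uses the dimension hypothesis, whereas in your scheme $n\ge3$ only enters through the sign of a second-order coefficient). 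Hence failure together with $V(\Om)\ge m^*$ forces $\beta\ge\ov\beta>0$ with no expansion near the ball at all, and the complementary regime $V(\Om)\le m^*$ is excluded by the crude chain $\sigma(\Om^\star)\le\mu(\Om^\star,\beta)\le(nm^*+\beta c)/k(c,n)$ for a suitable $m^*(c,n)$. If you wish to salvage your route you must supply the uniform Fuglede-type estimate for nearly spherical convex sets; the paper's algebraic cancellation makes that machinery unnecessary.
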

\begin{proof}
Let us consider a convex set $\Om \sq \R^n$ such that $P(\Om)=c$. Assume that inequality \eqref{bf06} fails to be true, i.e.
$$\mu (\Om^\star, \beta)< \mu (\Om, \beta).$$
Following the same argument as in \eqref{bf07}, we get
\begin{equation}\label{bf09}
\dfrac{ nV(\Omega^\star)+ \beta P(\Omega^\star)}{W(\Omega^\star )}= \mu(\Omega^\star, \beta) \leq \dfrac{ n V(\Omega)+ \beta P(\Omega)}{W(\Omega)}.
\end{equation}
Using Theorem \ref{main} to remove $W(\Om)$ we get
$$\dfrac{ n V(\Omega^\star)+ \beta P(\Omega^\star)}{W(\Omega^\star )}\le \dfrac{ nV(\Omega)+ \beta P(\Omega)}{W(\Omega^*)}\frac{P(\Om^\star)}{P(\Om)} \Big (\frac{V(\Omega^\star)}{V(\Omega)}\Big )^\frac 2n.$$
Since $P(\Om^\star)=P(\Om)=c$ we get
$$ n V(\Omega^\star)+ \beta c \le [nV(\Om)+\beta c]  \Big (\frac{V(\Omega^\star)}{V(\Omega)}\Big )^\frac 2n,$$
or, by elementary computation and using $V(\Om^\star) >V(\Om)$
$$nV(\Om^\star)^\frac 2nV(\Om )^\frac 2n \le \beta c \frac{V(\Om^\star)^\frac{2}{n}-V(\Om)^\frac{2}{n}}{V(\Om^\star)^\frac{n-2}{n}-V(\Om)^\frac{n-2}{n}}.$$
There exists a constant  $ C(c,n)$ depending  on $c$ and $n$ only, such that the right hand side is bounded from above by $\beta C(c,n)$, hence

\begin{equation}\label{bf12}
nV(\Om^\star)^\frac 2nV(\Om )^\frac 2n \le \beta C(c,n).
\end{equation}
The consequence of this inequality reads as follows: if $\Om$ does not satisfy \eqref{bf06} for some small $\beta$, then the volume of $\Om$ has to be small as well. 

Assume that $V(\Om) \ge m^*$ (the value $m^*$ will be fixed later) and let us set
$$\ov \beta := \frac {nV(\Om^\star)^\frac 2n(m^*)^\frac 2n }{C(c,n)}.$$
Clearly, \eqref{bf12} fails to be true, if $\beta <\ov \beta ^*$. 

It remains now to find a suitable value $m^*$ depending on $c$ and $n$ such that if $V(\Om)\le m^*$ and $P(\Om) =c$, the inequality
$\mu(\Om, \beta)\le \mu(\Om^\star, \beta)$ holds for small $\beta$. Following the natural order between the first Steklov and Wentzell eigenvalues, together with \eqref{bf09}, we have
$$\sigma(\Om^*) \le  \mu(\Om^\star, \beta)\le \dfrac{ n m^*+ \beta c}{W(\Omega)}.$$
We observe first that $\sigma (\Om^\star)$ is independent on $\beta$. Second, we claim that
$$k(c, n):= \inf\{W(\Om) : \Om \mbox{ convex}, P(\Om)=c \} >0.$$
This is trivial, just observing that
$$W(\Om) \ge W(\tilde B),$$
where $\tilde B$ is the ball, centered at the origin, with surface area $\frac c2$. Indeed, the key observation is that $P(\Om \cap \tilde B) \le P(\tilde B)=\frac c2$, hence
$$W(\Om) \ge \frac c2 \tilde r^2,$$
 $\tilde r$ being the radius of $\tilde B$. Finally, we get
\begin{equation}\label{bf14}
\sigma(\Om^*) \le \dfrac{ n m^*+ \beta c}{k(c, n)}.
\end{equation}
We can find $m^*$ depending only on $c$ and $n$ (for instance $m^*= \frac{\sigma(\Om^*)k(c, n)}{2n}$)  and $\tilde \beta$, such that  \eqref{bf14} fails as soon as $\beta \le \tilde \beta$. 

Finally, for this value $m^*$, we choose as $\beta^*:=\min\{\ov \beta, \tilde \beta\}$ and conclude the proof. 
\end{proof}

\section{Appendix}

After completing the proof of Theorem \ref{main},   we have discovered that the technical report \cite{We},  by Weinstock, contains a proof of the bidimensional version of our result. In the article \cite{We-a} issued from this report, published by Weinstock in  {\it Journal of Rational Mechanics and Analysis}, the section containing the result and the proof is completely removed. Maybe one reason to remove it, is the fact that in two dimensions, this result leads to a weaker version of the isoperimetric inequality.  

The proof given by Weinstock is a typical  two dimensional one, taking advantage from the representation of two dimensional convex sets via their support function. This proof seems hardly adaptable to higher dimensions. We have found suitable to give a brief account of it here. 

Let $D$ be an open, smooth, strictly convex set in the plane satisfying $\int_{\partial D } x \,ds = \int_{\partial D } y \,ds=0$.
Denoted by $(\cos \theta, \sin \theta)\,, \theta \in [0, 2 \pi]$,   the component of the outer unit normal at a point $P=(x,y) \in \partial D$  and by $h(\theta)$ the support function of $D$ then the one parameter family of tangents to $D$ is given by
\begin{equation}
\label{parametric1}
x\cos \theta+y\sin \theta -h(\theta)=0 \quad \theta \in [0, 2 \pi].
\end{equation}

Differentiating \eqref{parametric1} with respect to $\theta$ the parametric equations for $\partial D$ are
\begin{equation}\label{parametric2}
\left\{
\begin{array}{l}
x= h(\theta) \cos \theta -h'(\theta)\sin\theta \\
\\
y= h(\theta) \sin \theta +h'(\theta)\cos\theta  %\quad \theta \in [0, 2 \pi].
\end{array}
\right.
\end{equation}
and  from these, differentiating again with respect to $\theta$, we get
\begin{equation}
\left\{
\begin{array}{l}
x'(\theta)=-(h(\theta)+h''(\theta))\sin \theta, \\ \\
 y'(\theta)=(h(\theta)+h''(\theta))\cos \theta, \\ \\
 s'(\theta)^2 = x'^2(\theta) +y'^2(\theta) =(h(\theta)+h''(\theta))^2.
\end{array}
\right.
\label{ascissa}
\end{equation}
The convexity of $D$ ensures that 
\begin{equation}
\label{convex}
h(\theta)+h''(\theta)>0
\end{equation}
hence, choosing $s'(\theta) = h(\theta)+h''(\theta)$ and taking into account that $h$ and $h'$ are continuous and periodic functions of period  $2\pi$, by \eqref{ascissa} we have
\begin{equation}
\label{expressions}
\left\{
\begin{array}{l}
L =P(D)= \dint_0^{2\pi} h(\theta)\, d\theta\\\\
A=V(D)= \dfrac12 \dint_0^{2\pi} (h^2(\theta)+h(\theta)h''(\theta))\, d\theta\\\\
J=W(D)%= \dint_0^{2\pi} (h^2(\theta)+h'^2(\theta))(h(\theta)+h''(\theta))\, d\theta 
=  \dint_0^{2\pi} \left(h^3(\theta)+\dfrac12 h^2(\theta)h''(\theta)\right)\, d\theta.
\end{array}
\right.
\end{equation}
Setting for $\theta \in [0,2\pi]$
\begin{equation}
\label{convex2}
h(\theta) = \frac {L}{2\pi} + p(\theta) >0
\end{equation}
with 
\[
\dint_0^{2\pi} p(\theta)\, d\theta=0
\]
from \eqref{convex} it results
\begin{equation}
\label{convex3}
\dfrac {L}{2\pi}+p(\theta)+p''(\theta)>0.
\end{equation}

Then by \eqref{expressions}
\[
A =  \dfrac12 \dint_0^{2\pi} \left(\dfrac{L^2}{4 \pi^2} +p^2(\theta)+ p(\theta) p''(\theta)\right)\, d\theta,
\]
and
\[
J=   \dint_0^{2\pi} \left  ( \dfrac{L^3}{8 \pi^3} + \dfrac{3L}{2\pi} p^2(\theta)  +p^3(\theta) + \dfrac {L}{2\pi} p(\theta) p''(\theta) +\dfrac12 p^2(\theta)p''(\theta))   \right)\,d\theta.
\]

By \eqref{convex2} and \eqref{convex3} we have
\begin{align}
%\begin{array}{l}
\hskip.5cm&\pi J-LA = \pi  \dint_0^{2\pi} \left( \dfrac{L}{\pi} p^2(\theta)  +p^3(\theta) +\dfrac12 p^2(\theta)p''(\theta)) \right) \, d\theta =\label{fine}\\ \notag\\ 
\notag &\quad= \pi  \dint_0^{2\pi} p^2(\theta) \left(  \dfrac{L}{2\pi} + \dfrac12\left(\frac {L}{2\pi} + p(\theta) \right) + \dfrac12 \left(\dfrac {L}{2\pi}+p(\theta)+p''(\theta) \right)\right)\, d\theta \ge \\ \notag \\\notag
 &\quad\ge \dfrac{L}{2} \dint_0^{2\pi} p^2(\theta)\, d\theta \ge 0,
\end{align}
%\end{equation}
that is, inequality \eqref{disisop} in the case $n=2$.
Because of  \eqref{convex2} and \eqref{convex3}, equality holds in \eqref{fine} only when $p$ is identically zero that is if $D$ is a disc.
Obviously, by the continuity of $L,A,J$ with respect to variations of $D$, inequality \eqref{fine} holds true if we remove the smoothness assumption on $D$.

The convexity assumption is necessary. Indeed  if $D$ is the cardioid in polar coordinates
\[
\rho= 1-\cos \varphi
\]
it results
\[
\pi J-LA  =- \dfrac{4 \pi}{75}<0.
\]

\end{document}